\def\R{{\mathbb R}}
\def\dd{{\mathrm d}}
\def\N{{\mathbb N}}
\def\cB{{\mathcal B}}
\def\cH{{\mathcal H}}
\def\cF{{\mathcal F}}
\def\dd{{\rm d}}
\newtheorem{thm}{Theora}[section]
\newtheorem{theo}[thm]{Theorem}
\newtheorem{cor}[thm]{Corollary}
\newtheorem{prop}[thm]{Proposition}
\newtheorem{rem}[thm]{Remark}
\newtheorem{example}[thm]{Example}
\newcommand\dint{\displaystyle\int}
\newcommand\ds{\displaystyle\sum}
\newcommand*{\MyChi}{\raisebox{0.35ex}{\( \chi \)}}%
\begin{document}

 \title{A class of L\'evy  driven SDEs and their explicit invariant measures}
  \def\lhead{S.\ ALBEVERIO, B. Smii, L.\ Di Persio} 
  \def\rhead{Invariant measures for stochastic differential equations driven by L\'evy noise} 


\author{Sergio Albeverio\footnote{Dept. Appl. Mathematics, University of Bonn, HCM, BiBoS, IZKS; CERFIM, Locarno. {\tt albeverio@uni-bonn.de}}; %
  \\
  Luca Di Persio\footnote{University of Verona, Department of Computer Science, strada Le Grazie, 15, Verona,
    Italia. {\tt luca.dipersio@univr.it}}
    \\
  Elisa Mastrogiacomo\footnote{Universit\'a degli Studi di Milano Bicocca,
   Dipartimento di Statistica e Metodi Quantitativi,
   Piazza Ateneo Nuovo, 1 20126 Milano
    Italia. {\tt elisa.mastrogiacomo@unimib.it}}
   \\
 Boubaker Smii\footnote{King Fahd University of
Petroleum and Minerals, Dept. Math. and Stat., Dhahran 31261, Saudi
Arabia. \hspace*{5mm}{\tt boubaker@kfupm.edu.sa}}}
\date{}
\maketitle

\begin{abstract}

We describe a class of explicit invariant measures for both finite and infinite dimensional Stochastic Differential Equations (SDE) driven by L\'evy noise.
We first discuss in details the finite dimensional case with a linear, resp. non linear, drift. In particular, we exhibit a class of such SDEs for which the invariant measures are given in explicit form, coherently in all dimensions. We then indicate how to relate them to invariant measures for SDEs on separable Hilbert spaces.
\end{abstract}
\newpage
\tableofcontents
\newpage

\section{Introduction}\label{Introduction}
\subsection{Motivations and contents} \label{Motivations}
In the study of phenomena described by evolution equations and
stochastic processes the use of invariant measures plays an
important role, both from a theoretical and an applied point of
view. This is due to the fact that the presence of invariant
measures permits, in particular, to have a grip on the asymptotic
behaviour in time of the processes involved and often (in the
presence of ergodicity) to compute time averages of functionals, at
least, approximately, by averaging with respect to the invariant
measure.This is at the very basis of statistical mechanics, where
the invariant measure is the Gibbs measure, see, e.g.,
\cite{AKKR,Ru,SWEL}. The same idea has also been used in connection
with continuum systems, e.g. in hydrodynamics, see, e.g.,
\cite{AlbCru, AFER, AlbFHK, AFS}, and quantum field, see, e.g.,
\cite{AGoWu, AlGYo, AHK1, AHKS, AKR, JoMi, S.Mitter, Pa, ParWu, Si}.
Also in the general theory of dynamical system, invariant measures
play an important role. According to a principle of Kolmogorov the
finding of invariant measures for such systems might be facilitated
by perturbing slightly and stocastically the system, see \cite{Elg}
Invariant measures have also been intensively discussed in
connection with stochastic partial differential equations (SPDEs)
and, more generally, with stochastic processes, where they are the
basis of all Monte-Carlo methods, see, e.g., \cite{Pa, ParWu}. For
both theoretical and practical reasons it is useful to have
expressions for invariant measures which are as explicit as
possible. Often they also have invariance properties with respect to
transformations in state space, which makes them particularly
useful, reflecting important symmetry properties of the underlying
systems.
\newline
This paper is devoted to the search of such explicit measures for
(in)finite dimensional SDE driven by L\'evy noise and with nonlinear
drift coefficients. This connects to our previous paper
\cite{ADPMS1}, where we studied such equations in the infinite
dimensional case. In that paper we found, in particular, {\it
abstract} invariant probability measures for the equations at hand
and we discussed their relations with a decomposition
 of the solution process as a sum of a stationary component and an asymptotically in time vanishing component.
In the present work we reconsider the question of invariant measures
having in mind to characterize them explicitly, at least in some
cases we discuss particularly the case where the driving noise contains a jump component, since the case of driving noise of pure Gaussian type was already discussed,
for our system, in \cite{AlDiPMa}.

\noindent
In section $1.2$ we summarize basic concepts of the theory of Markov
semigroups, generators and Dirichlet forms, since they are basic for
the rest of the paper.

\noindent
 In chapter \ref{Section2} of the present
paper we concentrate ourselves on the finite dimensional case. This
serves as a basis for going over to the infinite dimensional case,
in the subsequent chapter $3$.

\noindent
 In Section $2.1$ we recall results
related to the case of linear drifts, i.e., for
Ornstein-Uhlenbeck-L\'evy (OUL) processes, where a complete
classification of invariant measures has been obtained,
particularly by work of Sato and Yamazato, see \cite{APD,Sat91,Sato,SY84,Yama}.\\
In Section \ref{Section2.2} we discuss invariant measures for
OUL-processes perturbed by nonlinear drifts, following and extending
basically work of \cite{ABRW} and \cite{BeSc}. We give here some of
the details since the methods are also useful for the later section
$2.4$.\\
In Section $2.3$ we discuss the symbol associated with solutions of
SDE, stressing the explicit form of the associated generators,
having in mind
concrete applications in Section $2.4$.\\
In Section $2.4$ we start from explicit invariant measures and
construct associated L\'evy-type generators and SDE. This is related
to techniques known in the case of Gaussian noise as Dynkin's {\it
h-transform} or, {\it ground state transformation}, see.
\cite{AHKS}.The extension to the L\'evy case was initiated by
\cite{A-Cufaro}, we give some observations and complements to this
construction, stressing both its relation to the symbols discussed
in Section \ref{Section2.3} and the invariant measures. The
discussion is then extended in Section \ref{Section2.5} considering
perturbed O-U-L\'evy processes, defined by invariant measures and
Dirichlet forms. In chapter $3$ we discuss  the infinite dimensional case.\\
Section $3.1$ presents the case of an infinite dimensional O-U
L\'{e}vy-process, following
basic work by \cite{ChoMi}, stressing also the relation with our paper \cite{ADPMS1}.\\
Section  $3.2$  presents the case of certain infinite dimensional
L\'evy driven systems , which can be seen as infinite dimensional limits
of the finite
dimensional systems discussed in Section \ref{Section2.4}.\\

\subsection{Basic concepts on Markov semigroups, generators, Dirichlet forms.}

A transition function on a Polish space $\mathcal{E}, \mathcal{B}(\mathcal{E})$, e.g.\ $\mathbb{R}^d, \mathcal{B}(\mathbb{R}^d)$, is
by definition a family of mappings $p_{s,t}(x,B)$, $x \in \mathcal{E}$,
$B \in \cB(\mathcal{E})$, with $0 \leq s \leq t < \infty$, and with values in $[0,1]$ with the properties:
\begin{enumerate}
\item $p_{s,t}(x, B)$ it is a probability measure as a function of $B$ for any fixed $x$;
\item it is measurable in $x$ for any fixed $B$;
\item $p_{s,s}(x, B) = \delta_x(B)$ for $s \geq 0$;
\item \label{it:ChKo}it satisfies
\begin{align*}
 \int_{\mathcal{E}} p_{s,t} (x, \dd y) p_{t,u} (y, B) = p_{s,u}(x,B), \qquad  for\  0\leq  s \leq t \leq u \;,
\end{align*}
 which is called the Chapman-Kolmogorov property.

\end{enumerate}
If, in addition,
\begin{enumerate}
\item[5.] $p_{s+h,t+h}(x, B)$ does not depend on $h$,
\end{enumerate}
then it is called a (temporally homogeneous) transition function and
it is easy to show that it is given by a one-parameter family of {\it Markov kernels} $p_t(x, B), t \geq 0 $, satisfying $1-4$, and such that $p_t (x, B) = p_{s,s+t}(x , B)$
for $s\geq 0$.

 In the case of a (temporally homogeneous)
transition function $4$ is written as
\begin{align*}
 \int_{\mathcal{E}} p_{s} (x, \dd y) p_{t} (y, B) = p_{s+t}(x,B), \qquad  for\   s, t \geq 0.
\end{align*}
This is called the semigroup property of $p_t$, $t \geq 0$.
A probability measure on $\mathcal{E}$ (or, more generally, a measure for which $p_t(X,\cdot)$ is integrable) is said to be invariant under
$p_t$, $t\geq0$, if
$\int_{B}p_t(x,B)\mu(\mathrm{d}x)=
\mu(B)$ for all $t>0$, and all Borel subsets $B$ of $\mathcal{E}$.

Let us also note that a transition function $p_t$ also defines a
semigroup acting on positive measurable functions $f$ on
$\mathcal{E}$,
by $p_t f(x) = \int_\mathcal{E} p_t(x,dy) f(y)$, $x \in \mathcal{E}$. Note that $f = \MyChi_B$, for any Borel subset $B$ of $\mathcal{E}$, we have
$\left( p_t \MyChi_B \right) (x) = p_t(x,B)$. Moreover the semigroup property of $p_t$ implies that $p_t \circ p_s = p_s \circ p_t = p_{t+s}$, for any $s,t \geq 0$.


One extends by linearity $p_t$ to the Banach space
$B(\mathcal{E})$ (complete, normed, linear space) of all the bounded
measurable real (or complex) valued functions $f \in
B(\mathcal{E})$, with norm $\| f\|_u:=\sup\mid f \mid$. From $\mid
p_t (f) \mid \leq \int f(y) p_t(x,dy) \leq  \| f \|_u$ we have that
$p_t$ is contractive , in fact $p_t, t\geq 0$ constitutes a bounded
linear strongly continuous semigroup acting on $B(\mathcal{E})$.
Note that $p_0 f =f \;,\; f \in B(\mathcal{E})$.


A stochastic process $X= (X_t), t \geq 0)$ on a probability space $\left( \mathcal{E}, \mathcal{B}(\mathcal{E}), \mathbb{P} \right)$, is said to be a Markov process with respect to a filtration $\left( \mathcal{F} \right)_{t \geq 0}$ of subsets of $\mathcal{E}$ if
$
 \mathbb{E} \left( X_t \mid \sigma(X_s) ) \right) = \mathbb{E} \left( X_t \mid \mathcal{F}_s\right) \;,\; \forall s \in [0,t]\;,
$
where $\sigma(X_s)$ indicates the $\sigma-algebra$ generated by $X_s$.
For other characterizations of the Markov property see, e.g., \cite{Bau}.
To a Markov family of processes on $\left( \mathcal{E}, \mathcal{B}(\mathcal{E}) \right)$ with probability measure $\mathbb{P}^x$ such that $x \mapsto \mathbb{P}^x\left( X_t \in B\right)$ is measurable for any $ B \in \mathcal{B}(\mathcal{E})$, there is naturally associated a transition function defined by
$$
 p_t^X \left(x,B\right) := \mathbb{P} \left( X_t \in B \mid X_0=x\right) \;,\; B \in \mathcal{B}(\mathcal{E})\;.
$$


By the properties characterizing the transition function we have $p_t f \geq 0$, for $f \geq 0$,  and if $f \leq 1$ then $p_t f \leq 1$, as well as $p_t 1 =1$, where 1 is the function identically equal to $1$ on $\mathcal{E}$. $p_t 1=1$ is sometimes called conservativeness property of $p_t$.



If $p_t^X$ is the transition function of a Markov process $X_t$ on
$\mathcal{B}$ then one shows that $\mu$ is invariant under $p_t^X$ iff
$\mu$ is the initial distribution of $X_t$ and $P(X_t\in B)=\mu(B)$,
for all $t\geq0, B \in \mathcal{B}(\mathcal{E})$. In fact from $ \int p_t(x,B)\mu(dx) =\mu(B)$ one
deduces, by the Markov property and $\mu(B)={\cal L}(X_0),$ that
$P(X_t \in B)=\int p_t(x,B)\mu(dx) =\mu(B)$. Viceversa, if this holds,
by the Markov property we have $ \int_B p_t(x,B)\mu(dx) =\mu(B)$,
hence that $\mu$ is invariant.\\
 This also coincides with the definition of $\mu$ invariant under $P_t$, in the sense that $\int P_t f d\mu= \int f d\mu, $ for all $f \in L^2(\mathcal{E},\mu),$ where
$(P_t)_{t\geq 0}$ is the Markov semigroup associated with
$(x,B)\mapsto p_t(x,\mathcal{B})$ in $L^2(\mathcal{E},\mu)$.

A probability measure $\nu$ on $\mathcal{E}$ is said to be the limit
distribution of a temporally homogeneous Markov process on
$\mathcal{E}$ with transition function $p_t$, $t\geq0$ on
$\mathcal{E}$ if $\lim p_t(x,\cdot)\to\nu$ as $t\to +\infty$, for
any $x\in\mathcal{E}$, in the sense of weak convergence of measures
on $\mathcal{E}$ (i.e.\ in the sense of integrals against functions
in $C_b(\mathcal{E})$). \\
The above definitions are adapted from, e.g.,  \cite[Chapt.3, Sec.17]{Sat91}. \\

To a given transition function $p_t(x,dy)$  there is associated a Markov
process $X_t$ densely defined on a probability space $\left(
\mathcal{E}, \mathcal{B}(\mathcal{E}), \mathbb{P}^x \right)$ such
that $\mathbb{P}^x (X_t \in A) = p_t (x,A)$, for all $A \in
\mathcal{B}(\mathcal{E})$, $x \in \mathcal{E}$. If $\mathcal{E}$ is
a linear space and if $p_t$ is space translation invariant, in the
sense that $p_t(x+a,A) = p_t(x,A+a)$ for all $ a \in \mathcal{E}$,
then $p_t(x,A) = \tilde{p}_t(A-x)$ for some $\tilde{p}_t$, $0 \leq
\tilde{p}_t \leq 1$, $\tilde{p}_t$ a convolution semigroup acting in
$\mathcal{E}$.


In general a strongly continuous semigroup on a Banach space $B$ is a family of bounded maps $T_t,$ $ t \geq0$ on $B$ such that $T_t T_s = T_{t+s}$, $T_0 =1$, $t \mapsto T_t x$ is continuous for every element $x \in B$. Often such semigroups are called  $C_0-semigroups$.
Such semigroups satisfy $\| T_t \| \leq M e^{\omega t}$, for $ t \in [0, +\infty)$, for some constants $M \geq 1$ and $\omega \geq0$.
 One shows, see, e.g., \cite{PaZy}, that given such a semigroup one can associate to it its infinitesimal generator $A$, which turns out to be a linear operator on the dense subset
  $D(A)$ of ${B}$ defined by $D(A) = \left\{ x \in B \mid \lim_{t \downarrow 0} \frac{T_t x -x}{t}= Ax \right\}$, the limit being understood in the norm of ${B}$
  (strong convergence of $\frac{T_t x -x}{t}$ to $Ax$ as $t \downarrow 0$).
Let ${B}^\star$ the dual of a Banach space ${B}$ over $\mathbb{R}$
or $\mathbb{C}$ (i.e. the space of all continuous linear maps from
${B}$ into $\mathbb{R}$ or $\mathbb{C}$). We denote the duality by
$\langle x^\star , x \rangle$, $x^\star \in {B}^\star$, $x \in {B}$.
A linear operator $A$ on ${B}$ is said to be dissipative if for
every $x \in D(A)$ there exists a $x^\star \in F(x) := \left\{ y \in
{B}^\star \mid \langle y,x \rangle = \| x\|_{{B}}^2 =
\|y\|_{{B}^\star}^2 \right\}$, such that $Re\langle Ax, x^\star
\rangle \leq 0 $.

Exploiting the Hahn-Banach theorem we have that $F(x)$ is non empty,
moreover, if ${B}$ is reflexive, then $F(x)$ is composed by a single
element. In the case where $B$ is a Hilbert space, this element
 can be identified with $x$ itself by the canonical duality between a Hilbert space and its dual.
As part of a theorem by Lumer and Phillips, if in addition $A$ is
such that the range of $\mathbbm{1}-A$ is ${B}$ (in which case one
calls $A$ maximal dissipative) then $A$ is the infinitesimal
generator of a $C_0$-semigroup on ${B}$, see, e.g.,
\cite[Th.4.3]{PaZy}. If again in particular ${B}$ is a Hilbert space and
$(-A)$ is positive (i.e. $\langle -A x,x\rangle \geq 0$, for all $x
\in D(A)$ ), with $\langle\cdot,\cdot\rangle$ the scalar product in
${B}$, then $A$ is dissipative. If ${B}$ is complex Hilbert space,
then $-A$ positive is symmetric, as seen from the polarization
formula, see, e.g., \cite{Procesi}.

  We recall that a densely defined operator $T$ in a Hilbert space $\mathcal{H}$ is said to be symmetric if its adjoint $T^\star$ is an extension of $T$ in the sense that  the domain of
  $T^\star$ contains the domain of $T$ and the restriction of $T^\star$ to the domain of $T$ coincides with $T$, and we write $ T \subseteq T^\star$.  Moreover $T$ it is called self-adjoint if $D(T)=D(T^\star)$. A positive self-adjoint operator $A$ in a Hilbert space generates a $C_0-$contraction semigroup $e^{-tA}$, $t \geq 0$.
Viceversa, if a $C_0-$contraction semigroup $T_t$ on a Hilbert space is self-adjoint ( i.e. $T_t^\star = T_t$) then its generator is symmetric and positive.
A special case is the one where $\mathcal{H}$ is a real $L^2-$space , say $L_\mathbb{R}^2 \left(\mathcal{E}, \mu\right)$, with $\mu$ a probability measure on $\left(\mathcal{E},\mathcal{B}(\mathcal{E}))\right)$.
In this case it is natural to consider $C_0-$contraction semigroups $T_t$, $t \geq 0$, which are also symmetric in $L_{\mathbb{R}}^2 \left(\mathcal{E}, \mu\right)$ (hence also self-adjoint being bounded), and in addition are sub-Markov semigroups in the sense that for any $ f \in L_{\mathbb{R}}^2 \left(\mathcal{E}, \mu\right)$, where $0 \leq f \leq 1$, $\mu-a.e.$, one has $ 0 \leq T_t f \leq 1$.

If , in addition, $T_t 1 = 1$, $t\geq 0$, $\mu_a.e.$, i.e. $T_t$ is conservative, then $T_t$ is said to be a Markov semigroup.

It is known that $T_t$ has then a kernel $p_t(x,dy)$ such that
$\left( T_t f \right) (x) = \int f(y) p_t(x,dy)$, $f \in
\mathcal{B}(\mathcal{E})$. One defines namely $p_t(x,B) = T_t
\MyChi_B(x)$, for all $B \in \mathcal{B}(\mathcal{E})$. It follows that $B \rightarrow p_t(x,B)$ is a probability  measure on $\mathcal{B}(\mathcal{E})$. Then $T_t$ coincides on $\mathcal{B}(\mathcal{E})
\subset L^2_\mathbb{R}(\mathcal{E},\mu)$ with the Markov semigroups
$p_t$ given by the kernels $p_t(x,\cdot)$.


A measure $\nu$ which is $p_t$ invariant is also $T_t$ invariant in the sense of our definition of invariance for semigroups acting on $\mathcal{B}(\mathcal{E})$.
Note that $\nu = \mu$ is invariant under $p_t$ since $ \int p_t(x,B) \mu(dx) = \mu(B)$, since the left hand side is equal to $$\int \mathbbm{1}_B(y) p_t(x,dy) \mu(dx) = \int \left(p_t \MyChi_B\right)(x) \mu(dx) = \langle \mathbbm{1}, p_t \MyChi_B \rangle_\mathcal{H} = \langle p_t^\star \mathbbm{1}, \MyChi_B \rangle_\mathcal{H} = \langle p_t \mathbbm{1} , \MyChi_B \rangle_\mathcal{H} = \mu(B)\;,$$ where we have used  both $p_t^\star = p_t$ and $p_t \mathbbm{1} = \mathbbm{1}$.

To a self-adjoint positive operator $-A$ in a real (or complex) Hilbert space $\mathcal{H}$ there is uniquely associated a closed bilinear (resp. sesquilinear) positive form
 $\mathbcal{E}_\mathcal{H}$  on $\mathcal{H} \times \mathcal{H} $ such that $\langle (-A)^{\frac{1}{2}} f , (-A)^{\frac{1}{2}} g \rangle =
 \mathbcal{E}_\mathcal{H}(f,g)$, for all $f,g \in D\left(\mathbcal{E}_\mathcal{H}\right)= D\left( (-A)^{\frac{1}{2}} \right)$,  $D\left(\mathbcal{E}_\mathcal{H}\right)$ being the (dense) domain of the form as a dense subset of $\mathcal{H}$, e.g., \cite{Kato}

Especially $D(-A) \subseteq D\left( (-A)^{\frac{1}{2}}\right)$,  $(-A)^{\frac{1}{2}}$ is defined, e.g., by the spectral theorem. If $-A$ is only symmetric, positive,
 then $(f,-A g) =  \dot{\mathbcal{E}}_\mathcal{H} (f,g)$ for any $f$ in some minimal domain $D\left( \dot{\mathbcal{E}}_\mathcal{H} \right) $, $g \in D(A)$.
If a sesquilinear form has this aspect then it is automatically
closable on $D\left( \dot{\mathbcal{E}}_\mathcal{H}
\right)\subset\,D(A)$, see \cite[Th. 1.2.7]{Kato}. There is a very
interesting relationship between self-adjoint $C_0-$contraction
semigroups, their positive generators and special symmetric closed,
positive sesquilinear forms. For this we take $\mathcal{H} =
L^2_\mathbb{R} (\mathcal{E}, \mu)$, for some $\sigma-$finite space
$\left(\mathcal{E}, \mathcal{B}(\mathcal{E}), \mu\right)$. A closed
symmetric positive sesquilinear form acting on $\mathcal{H} \times
\mathcal{H}$ is said to be  a Dirichlet form if it has the
contraction property $\mathbcal{E}_\mathcal{H}\left( f^\#,
g^\#\right) \leq \mathbcal{E}_\mathcal{H} (f,g)$ for $f^\# :=
\left(f \vee 0\right) \wedge 1$, $f,g \in
D(\mathbcal{E}_\mathcal{H})$. It turns out that such forms are in
$1-1$ correspondence with self-adjoint Markov semigroups $T_t$ on
$\mathcal{H}$.

The relation is characterized by $\mathbcal{E}_\mathcal{H} (f,g) = \left( (-A)^{\frac{1}{2}} f , (-A)^{\frac{1}{2}} g\right),$ with $-A$ the infinitesimal generator of $T_t$. The theory of Dirichlet forms describes these relations and gives a precise description of Markov processes associated with such structures. The properties of the associated Markov processes depend on $regularity$, resp. $quasi-regularity$, of the underlying Dirichlet forms, see, e.g., \cite{FuOTa,MaRo}


\section{Invariant measures in finite dimensions}\label{Section2}
\subsection{The case of  Ornstein-Uhlenbeck L\'{e}vy processes}\label{Section2.1}
The aim of this section is
to characterize the invariant measure corresponding to the solution
of the following finite dimensional SDE
\begin{align*}
    \dd X(t) = A X(t) \dd t + \beta(X(t))\dd t + \dd L(t),
\end{align*}
where $A$ is a positive definite matrix on $R^d$, $\beta: \R^d \to
\R^d$ is a possibly nonlinear function from $\R^d$ into itself and
$L(t)$ is an $\R^d$-valued L\'evy process generated by the triplet
$(Q,\nu, \gamma)$ (see below and  \cite[Definition 8.2]{Sat91} for
more details). To this end, we will first recall some well-known
result concerning the description of the invariant measure
corresponding to the Ornstein-Uhlenbeck process on $\R^d$. We refer
to \cite[Chapter 17]{Sat91} and \cite[Sections 2,3]{SY84} for a more
complete treatment of the subject.


We recall that a
probability measure $\mu$ on $\R^d$ is infinitely divisible if and only if
its Fourier transform $\hat{\mu}$ has the L\'evy-Khinchine form
\begin{equation}\label{RAHH}
\hat{\mu}(z)={\rm exp}\left\{-{1 \over 2}\langle z,\,Qz\rangle +
i\langle \gamma\,,z\rangle + \int_{\R^d}\left(
\mathrm{e}^{\mathrm{i}\langle z,x \rangle} - 1 - \mathrm{i}\langle
z,x \rangle\,\MyChi_{B_1}(x) \right)\,\nu(dx)\right\}  \ z\,\in\,\R^d,
\end{equation}
where $Q$ is a symmetric positive definite $d\times d$-matrix,
$\gamma\,\in\,\R^d$, $\nu$ is a (non-necessarily finite, but
positive) $\sigma-$finite measure on $\R^d$ satisfying
$\nu(\{0\})=0$, and $\int \left( x^2 \wedge 1 \right) \nu(dx) <
+\infty$, where $B_1$ is the unit ball in $\R^d$, see,
e.g.,\cite[Theorem 8.1]{Sat91}. Such a measure $\nu$ is called
L\'{e}vy measure of $\mu$.\\ Following \cite{Sat91}, we call
$(Q,\,\nu,\,\gamma)$ the generating triplet (or simply the
characteristics) of $\mu$, as in \cite{APD}. $Q$, $\nu$, $\gamma$
are called respectively the Gaussian covariance matrix, the L\'evy
measure and the drift of $\mu$. We notice that when $Q=0$, $\mu$ is
called purely non Gaussian. When $Q=0, \gamma=0$ then $\mu$ is said
to be of purely jump-type.
The term $\psi_{\nu}(z):={\int_{\R^d}\left(
\mathrm{e}^{\mathrm{i}\langle z,x \rangle} - 1 - \mathrm{i}\langle
z,x \rangle\,\MyChi_{B_1}(x) \right)\,\nu(dx)}$ is often called
``characteristic exponent" or ``L\'evy symbol" or ``L\'evy
exponent".
\begin{rem}\label{BAABA}
The form of the jump-type term in the formula \eqref{RAHH} for the
Fourier transform of $\mu$ can also, equivalently, be written as
\begin{equation}
{\mathrm exp}\left\{\int_{\R^d}\left( \mathrm{e}^{\mathrm{i}\langle z,x
\rangle} - 1 - \mathrm{i}\langle z,x \rangle\,c(x) \right)\,\nu(dx)\right\} \; , \; z \in \mathbb{R}^d \, ,
\end{equation}
for any bounded measurable real-valued function $c(x)$ on $\R^d$,  such that $ x \mapsto \mathrm{e}^{i \langle z,x \rangle} - 1 -
\mathrm{i}\langle z,x \rangle\,c(x)$ is in $L^1 (\R^d,\,\nu)$ and
$c(x)=O({1 \over{\mid x\mid}})$ as $|x|\longrightarrow\,
\infty$,
provided we replace simultaneously $\gamma$ by $\gamma_c=\gamma+
\int_{\R^d}\left( c(x)-\MyChi_{B_1 }(x) \right)\nu(dx)$,
\\
A frequently used choice of $c(x)$ is $c(x)= {1 \over {1 +\mid
x\mid^2}}$, with $x\in \R^d$. For this and other choices for $c$, see, e.g.,
\cite[pgg. 38,39]{Sat91}. One characterizes the L\'evy-Khinchine
formula rewritten in this term as L\'evy-Khinchine formula with
generating triplet $(Q,\,\nu,\,\gamma_c)$.
\end{rem}


L\'{e}vy processes constitute the natural class of stochastic processes $L(t)$ associated with infinitely divisible probability measures on $\mathbb{R}^d$. We simply recall that they are characterized by having independent stationary increments and they satisfy $L(0) =0$ a.s., are stochastically continuous (i.e. continuous in probability, namely $\mathbb{P} (\mid L(t) -L(s) \mid > \epsilon ) \rightarrow 0$ as $t \downarrow s$, for all $\epsilon >0$) and c\`{a}dl\`{a}g (right continuous paths, with left limits, a.s.). Their transition functions are of the form $p_t^L (x,B) = p_1(B-x)^t$, the t-th convolution power of $p_1(B-x)$, where $p_1(B) := p_{L(1)}(B)$, i.e. $p_1(\cdot)$ is the law of $L(1)$.

We say that $L(t)$ corresponds to the infinitely divisible distribution $p_{L_1}$ on $\left(  \mathbb{R}^d , \mathbb{B}(\mathbb{R}^d)\right)$ or it is generated by the triplet
 $\left( Q, \nu, \gamma \right) $ of $p_{L_1}$.
Define the corresponding Markov semigroup $p_t^L$ by $\left(p_t^L f\right) (x) = \int_{\mathbb{R}^d} f(y) p_t^L (x,dy)$, for $f \in \mathbb{B} (\mathbb{R}^d)$.
We can restrict it to the Banach subspace $C_0 (\mathbb{R}^d)$ of functions vanish at infinity, with supnorm, since indeed it leaves $C_0(\mathbb{R}^d)$ invariant, see \cite[pp.207-208]{Sato}.

One has that
\begin{equation}
 \left( p_t f \right) (x) = \mathbb{E} f ( x+L(t)) = \int_{\mathbb{R}^d} \left( p_{L_1} (dy)\right)^t f(x+y) \;,\; f \in \mathbb{B}(\mathbb{R}^d)\;,\; x \in
 \mathbb{R}^d.
\end{equation}

 For $f$ of the form $f_z(x) = e^{i \langle z,x
\rangle}$ with $x,z \in \mathbb{R}^d$, we have then
\begin{equation}
\mathbb{E} \left( f(x+L(t) ) \right) = \mathbb{E} \left( e^{i \langle z, x+L(t) \rangle}\right)= \int_{\mathbb{R}^d} p_{L_1} (d\rho)^t e^{i\langle z,x+\rho\rangle}\;,
\end{equation}
hence for $x=0$, the definition of Fourier transform and (2), the
following holds
\begin{equation}
\begin{split}
\mathbb{E} \left( e^{i \langle z, L(t) \rangle}\right) &= \left(\widehat{p_{L_1}} (z) \right)^t\\
&={\exp}\left\{-{t \over 2}\langle
z,\,Qz\rangle + it\,\langle
\gamma,\,z\rangle + t  \int_{\R^d}\left(
e^{i \langle z,y \rangle} - 1 - i\langle
z,y \rangle\,\chi_{B_1}(y) \right)\,\nu(\dd y)\right\}\;.
\end{split}
\end{equation}

In particular one thus gets, for any $x \in \R^d$:
\begin{equation}\label{bbbb}
\mathbb{E}(e^{i\langle x,\,L(t)\rangle})={\rm e}\left\{-{t \over
2}\langle x,\,Qx\rangle + it\,\langle \gamma,\,x\rangle + t \cdot
\int_{\R^d}\left( e^{i \langle x,y \rangle} - 1 - i\langle x,y
\rangle\,\chi_{B_1}(x) \right)\,\nu(\dd y)\right\}.
\end{equation}
The infinitesimal generator $\mathcal{L}$ of $P_t,\,t\,\geq 0$ (and
of $(L(t))_{t\geq 0})$ has $C^{\infty}_{0}(\R^d)$ as a core (i.e.,
it is the closure in $C_{0}(\R^d)$ of its restriction to
$C^{\infty}_{0}(\R^d)$) and on $C^2_{0}(\R^d)$ it acts as
\begin{multline}\label{eq:inf_genLevy}
L f(x)= {1 \over 2}\,\ds_{j,k=1}^d q_{j,k}\,{\partial
\over{\partial_{x_j}\partial_{x_k}}}\,f(x)
 +\langle\gamma\,,\nabla f(x)\rangle\,+
\\ +\int_{\R^d}\left( f(x+y)-f(x)-\chi_{B_1}(y)\langle y,\,\nabla
f(x)\rangle \right)\,\nu(\dd y), \qquad f\in \,C^2_{0}(\R^d),
\end{multline}
where $(q_{j,k})_{j,k=1,\cdots,d}$ denotes the elements of the matrix $Q$.
More details can be found in  \cite[Theorem 31.5, p. 208]{Sat91}.\\
We shall now discuss perturbations of this semigroup and the
corresponding process by drift terms, beginning with the simple case
of a linear drift of a special form, passing then to a general
linear
drift and finally to the case of a nonlinear drift.\\
In the next proposition we shall show that starting from a L\'evy
process $(L(t))_{ t\geq 0 }$ one can construct the
transition probability function for an Ornstein-Uhlenbeck process
with parameter $c >0$ and L\'evy noise $L(t)$. In particular we will see that, defining
$X^c(t):=e^{-c\, t}+ \int_0^t\,e^{-c(t-s)} \dd L(s)$ for any $t \geq 0$, then $ X^c(t)$ is the unique mild solution
of the linear SDE with L\'evy noise
\begin{equation}
\begin{aligned}\label{ABDRA}
\dd X^c(t)&=-c\,X(t)\dd t+\dd L(t),\qquad t\geq 0.\\
X^c(0)&=x.
\end{aligned}
\end{equation}
For any $c>0$ we will denote by $\mathcal{L}^{c}$ the infinitesimal
generator  of the temporally homogeneous transition semigroup
$p_t^{c}$ of $X^c(t),$ defined first on
$C^2_{0}(\R^d)\,\subset\,C_{0}(\R^d)$; it turns out that
$\mathcal{L}^{c}$ has on $C_{0}^2(\R^d)$ the form $\mathcal{L} + c
\cdot\,\nabla$, where $\mathcal{L}$ is the linear operator defined
on $C_{0}^2(\R^d)$ in \eqref{eq:inf_genLevy}
\begin{prop}
    \label{thm:satz1}
    Let $(L(t))_{t\geq0}$ be a $d$-dimensional time homogeneous, L\'{e}vy process on $\mathbb{R}^d$, generated by a triplet $(Q,\nu,\gamma)$.
    Let $c\,> 0$. Then there is a temporally homogeneous transition probability function $( p_t^{c})_{t\geq 0}$ on $\mathbb{R}^d \times \mathcal{B}(\R^d)$ such that
    \begin{equation}
        \int_{\mathbb{R}^d} \mathrm{e}^{\mathrm{i}\langle z,y \rangle}p_t^{c}(x,\mathrm{d}y) = \exp \left[ \mathrm{ie}^{-c\,t}\langle x,z \rangle +
        \int\limits_{0}^{t}\psi(\mathrm{e}^{-c\, s}z)\mathrm{d}s \right], \qquad x, z\in
        \R^d,
    \end{equation}
    with $\psi(z):=\log\hat{p}^{c}_{L_1}(z)$, $ z\in\R^d$. $p_t^{c}(x,\mathrm{d}y)$ is the transition function of the OU process with L\'evy noise $L(t)$ associated to
    the equation \eqref{ABDRA}.

    For each $t\geq 0,x\in\R^d$, the probability measure $B \mapsto p_t^{c}(x,B)$ is an infinitely divisible probability measure on $\mathbb{R}^d$ with generating triplet
    $(Q_t,\nu_t,\gamma_{t,x})$ given by
\begin{equation}
\left\{
\begin{array}{lll}
        Q_t := \int\limits_{0}^{t}\mathrm{e}^{-2cs}\,\mathrm{d}s\,Q, \\
        \nu_t(B) := \int\limits_{\R^d} \nu(\mathrm{d}y) \int\limits_{0}^{t} \chi_B(\mathrm{e}^{-c\, s}y)\,\mathrm{d}s, \quad B\in\mathcal{B}
        ({\R}^d), \\
        \gamma_{t,x} := \mathrm{e}^{-c\, t}x + \int\limits_{0}^{t}\mathrm{e}^{-c\, s}\,\mathrm{d}s\,\gamma + \int_{\R^d}\int\limits_{0}^{t}\Big(
        \mathrm{e}^{-cs}y[\chi_{B_1}(\mathrm{e}^{-cs}y) - \chi_{B_1}(y)]\,\mathrm{d}s\,\Big)\,\nu(\mathrm{d}y),
\end{array}
\right.
\end{equation}
\end{prop}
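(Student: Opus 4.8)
The plan is to define $p_t^c(x,\cdot)$ to be the law of the (unique) mild solution $X^c(t)=e^{-ct}x+\int_0^t e^{-c(t-s)}\,\dd L(s)$ of \eqref{ABDRA} started at $x$, and then to establish in turn (i) the stated characteristic function, (ii) the Chapman--Kolmogorov property, so that $(p_t^c)$ is genuinely a temporally homogeneous transition function, and (iii) the identification of the generating triplet. Once $p_t^c(x,\cdot)$ is defined this way, the claim that it is the transition function of the process solving \eqref{ABDRA} holds by construction, and temporal homogeneity follows from the stationarity of the increments of $L$.

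First I would compute the Fourier transform of this law. The deterministic shift $e^{-ct}x$ factors out as $e^{\,i e^{-ct}\langle x,z\rangle}$, so everything reduces to the characteristic function of the stochastic convolution $\int_0^t e^{-c(t-s)}\,\dd L(s)$. For this I would invoke the standard identity, valid for a deterministic scalar integrand $f$,
$$\E\left[\exp\left(i\left\langle z,\int_0^t f(s)\,\dd L(s)\right\rangle\right)\right]=\exp\left(\int_0^t\psi\!\left(f(s)z\right)\,\dd s\right),$$
which one obtains by approximating the integral by Riemann--Stieltjes sums over a partition $0=s_0<\dots<s_n=t$, using the independence and stationarity of the increments of $L$ together with \eqref{bbbb} to factorise the expectation into $\prod_j \exp\big((s_{j+1}-s_j)\,\psi(f(s_j)z)\big)$, and then passing to the limit. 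Taking $f(s)=e^{-c(t-s)}$ and substituting $u=t-s$ turns $\int_0^t\psi(e^{-c(t-s)}z)\,\dd s$ into $\int_0^t\psi(e^{-cs}z)\,\dd s$, yielding the asserted formula.

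Next I would verify Chapman--Kolmogorov directly at the level of Fourier transforms. Integrating the characteristic function of $p_t^c(y,\cdot)$ against $p_s^c(x,\dd y)$ involves only the $y$-dependent factor $e^{\,i\langle y,e^{-ct}z\rangle}$, so the convolution reduces to evaluating the characteristic function of $p_s^c(x,\cdot)$ at the point $e^{-ct}z$; a shift $v=u+t$ then recombines the resulting integrals as $\int_0^s\psi(e^{-c(u+t)}z)\,\dd u+\int_0^t\psi(e^{-cu}z)\,\dd u=\int_0^{s+t}\psi(e^{-cv}z)\,\dd v$, reproducing exactly the characteristic function of $p_{s+t}^c(x,\cdot)$.

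Finally I would read off the generating triplet by inserting the L\'evy--Khinchine form of $\psi$ (as in \eqref{RAHH}) into $\int_0^t\psi(e^{-cs}z)\,\dd s$ and separating the Gaussian, jump and drift contributions. The Gaussian part integrates to $-\tfrac12\langle z,Q_tz\rangle$ with $Q_t=\int_0^t e^{-2cs}\,\dd s\,Q$. For the jump part the natural candidate is the image measure $\nu_t$ of $\nu(\dd y)\,\dd s$ under $(y,s)\mapsto e^{-cs}y$, which is precisely the stated $\nu_t$; one checks $\int(|x|^2\wedge 1)\,\nu_t(\dd x)\le t\int(|y|^2\wedge 1)\,\nu(\dd y)<\infty$, so $\nu_t$ is a genuine L\'evy measure. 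The one delicate point, and the step I expect to require the most care, is that representing the jump part with the compensator $\chi_{B_1}(e^{-cs}y)$ demanded by $\nu_t$, rather than the $\chi_{B_1}(y)$ produced by the substitution, forces a correction term $\int_0^t\int_{\R^d}e^{-cs}y\,[\chi_{B_1}(e^{-cs}y)-\chi_{B_1}(y)]\,\nu(\dd y)\,\dd s$; since $e^{-cs}\le 1$ this integrand is supported on $\{|y|>1\}$ and bounded there by $1$, so the integral is finite and, added to $e^{-ct}x$ and $\int_0^t e^{-cs}\,\dd s\,\gamma$, yields exactly $\gamma_{t,x}$.
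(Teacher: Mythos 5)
Your proof is correct and follows essentially the same route as the paper, whose proof of Proposition~\ref{thm:satz1} is simply a citation of Sato's Lemmas 17.1 and 17.4: Lemma 17.1 is precisely your Riemann-sum identity for $\E\bigl[\exp\bigl(i\langle z,\int_0^t f(s)\,\dd L(s)\rangle\bigr)\bigr]$, and Lemma 17.4 is the extraction of the triplet $(Q_t,\nu_t,\gamma_{t,x})$ from $\int_0^t\psi(e^{-cs}z)\,\dd s$, including the compensator-mismatch correction $\chi_{B_1}(e^{-cs}y)-\chi_{B_1}(y)$ that you correctly identify as the delicate step. Your Fourier-level verification of Chapman--Kolmogorov and the bound $\int(|x|^2\wedge 1)\,\nu_t(\dd x)\le t\int(|y|^2\wedge 1)\,\nu(\dd y)$ accurately supply details the paper's citation leaves implicit.
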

\begin{proof}
    The proof is in \cite[Lemmas 17.1 and 17.4]{Sat91}.
\end{proof}

\begin{rem}
    Proposition \ref{thm:satz1} extends to separable Hilbert spaces $\mathcal{H}$ using basic properties of measures on $\mathcal{H}$, see, e.g., \cite{Part}.
\end{rem}

\begin{rem}
    When $L(t)$ is the standard Brownian motion on $\mathrm{R}^d$ the temporally homogeneous Markov process having the transition function
     $(p_t^{c})_{t\geq 0}$ of Proposition (\ref{thm:satz1}) is just the Ornstein-Uhlenbeck process on ${\R}^d$ (with ``diagonal" drift $b(x)= -c\,x,\,x\,\in\,\R^d, c>0$).
\end{rem}

By definition, in the general case of the Proposition
(\ref{thm:satz1}), where $L(t)$ is a general L\'{e}vy process on
$\mathbb{R}^d$ with L\'evy triplet $(Q,\nu,\gamma)$, the temporally
homogeneous Markov process $Y(t)$ with transition function
$(p_t^{c})_{t\geq 0}$ is called the Ornstein-Uhlenbeck
process with
L\'{e}vy noise $L(t)$ (or process of Ornstein-Uhlenbeck-type generated by $(Q,\nu,\gamma,c)$, in the terminology of \cite[Definition 17.2]{Sat91}).

Similarly as for the above derivation of the formula (\ref{bbbb}) for
$\mathbb{E}(e^{i\langle x,\,L(t)\rangle})$ starting from $p_t$ we
derive the following:
\begin{equation*}
\begin{aligned}
(P_t^{c}
f)(x):&=\mathbb{E}^x(f(X(t)))& \\
&=\int_{\R^d} p_t^{L,c}(x,\,\dd y)\,f(y)\nonumber\\&=\,\int_{\R^d}
p_t^{L,c}(x,\,\dd y)\,f(e^{-c\,t}\,x+y),\qquad for \ any \,f \in
C_0(\R^d), x,y \,\in\,\R^d.
\end{aligned}
\end{equation*}
In the above formula $\mathbb{E}^x$ stands for the expectation
with respect to the underlying measure for the process $X(t),t\geq 0, $ started at $x$.\\
We get
\begin{equation}
\mathbb{E}^x(e^{i\,\langle y,\,X(t)\rangle})={\rm exp}\left\{ i\,e^{-c\, t}\,\langle
y,\,x\rangle + \int_0^t\,\psi(e^{-c(t-s)}\,x)\dd s\right\},\,x,\,y\,\in\,\R^d.
\end{equation}
\Big(with, as in Proposition 2.2,\,
$\psi(z):=\log\hat{p}^{c}_{L_1}(z),\, z\in\R^d$\,\Big).

These considerations have been extended in \cite{ SY84} to the case
of general linear drift terms of the form $-A\cdot\,\nabla$, with
$A$ a
non-negative symmetric real-valued $d\times d-$matrix.\\
The analogue of Proposition (\ref{thm:satz1}) holds with $c$
replaced by $A$, $e^{-ct}\langle x,z\rangle$ by $ \langle
e^{-At}x,z\rangle$, $\psi(e^{-c\, s}z)$ by $\psi(e^{-As}z)$. Moreover,
corresponding formulas for $(Q_t, \nu_t, \gamma_{t,x})$ hold with
$e^{-{2c\, s}}$ and $e^{-c\, s}$ replaced respectively by $e^{-{2As}}$,
$e^{-As}$. For the proof we refer to \cite{SY84}.\\
Also the formulae for $P_t^{c}$ and $\mathcal{L}^{c}$ extend
correspondingly to formulae for the corresponding quantities
$P_t^{A}$ and $\mathcal{L}^{A}$, as follows:
\begin{prop}
    \label{thm:satz2}
    The smallest closed extension of $\mathcal{L}^{A}$ in $C_{0}(\mathbb{R}^d)$ is the infinitesimal generator of a strongly continuous non-negative semigroup $(P_t^{A})_{t\geq0}$, such that
    \begin{equation}
        (P_t^{A}f)(x) = \int\limits_{\mathbb{R}^d} f(y)p_t^A(x,\mathrm{d}y),
    \end{equation}
    where $(p_t^{A}(x,\cdot))_{t\geq 0,x\in \R^d}$ are the transition probabilities of the $\R^d$-valued process solving \begin{equation}\label{mmmm}
  \mathrm{d}X(t)= -AX(t)\,\mathrm{d}t+\mathrm{d}L(t),\\with\,\,
  X(0)=x,\,x\,\in\,\R^d,\,t>0.
    \end{equation}

\end{prop}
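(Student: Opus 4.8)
The plan is to mirror the scalar derivation that produced Proposition \ref{thm:satz1}, replacing the parameter $c>0$ by the symmetric non-negative matrix $A$ throughout. Since $A$ is symmetric, the matrix exponentials $e^{-As}$, $s\geq 0$, form a commuting family with $e^{-A^\top s}=e^{-As}$, so every computation carried out in the scalar case goes through with $c\,s$ replaced by $As$. First I would exhibit the explicit mild solution of \eqref{mmmm},
\begin{equation*}
X(t)=e^{-At}x+\int_0^t e^{-A(t-s)}\,\dd L(s),\qquad t\geq 0,
\end{equation*}
and check that it is the unique such solution, exactly as for $X^c(t)$ preceding Proposition \ref{thm:satz1}.

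Next I would compute the characteristic function of $X(t)$. Using the independence and stationarity of the increments of $L$ together with the L\'evy--Khinchine representation \eqref{bbbb}, the stochastic integral against $\dd L$ turns into a time integral of the symbol, giving
\begin{equation*}
\E^x\!\left(e^{i\langle z,X(t)\rangle}\right)=\exp\!\left[i\langle e^{-At}x,z\rangle+\int_0^t \psi(e^{-As}z)\,\dd s\right],\qquad x,z\in\R^d,
\end{equation*}
with $\psi(z)=\log\hat p_{L_1}(z)$, where the symmetry of $A$ is used to turn $e^{-A^\top s}$ into $e^{-As}$. Reading off this exponent shows that $B\mapsto p_t^A(x,B)$ is infinitely divisible with generating triplet $(Q_t,\nu_t,\gamma_{t,x})$ obtained from that of Proposition \ref{thm:satz1} upon replacing $e^{-2cs}$ by $e^{-2As}$ and $e^{-cs}$ by $e^{-As}$; the Chapman--Kolmogorov (semigroup) property of $(p_t^A)_{t\geq 0}$ then follows from the additivity $\int_0^{t+r}=\int_0^t+\int_t^{t+r}$ of the exponent combined with a change of variables, so that $p_t^A$ is a genuine temporally homogeneous transition function.

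I would then verify the Feller property: the convolution-type structure of $p_t^A(x,\cdot)$ shows that $P_t^A$ leaves $C_0(\R^d)$ invariant, is positivity preserving and contractive, and is strongly continuous as $t\downarrow 0$; hence $(P_t^A)_{t\geq 0}$ is a non-negative $C_0$-semigroup on $C_0(\R^d)$. Differentiating the characteristic function at $t=0$ (equivalently, letting $t\downarrow 0$ in $(P_t^A f-f)/t$ for $f\in C_0^2(\R^d)$) identifies the action of its generator on $C_0^2(\R^d)$ with $\mathcal{L}^A$, the matrix analogue of $\mathcal{L}^c=\mathcal{L}+c\cdot\nabla$ obtained by replacing $c$ with $A$. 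Finally, arguing as in \cite[Ch.~17]{Sat91}, $C_0^\infty(\R^d)$ is a core for the generator, so that the smallest closed extension of $\mathcal{L}^A$ coincides with the generator of $(P_t^A)_{t\geq 0}$, which is the asserted identification.

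The main obstacle is this last step: establishing that $C_0^2(\R^d)$ (or $C_0^\infty$) is a core, i.e.\ that the closure of $\mathcal{L}^A|_{C_0^2}$ is all of the generator rather than merely a restriction of it. In the presence of the nonlocal jump part $\int(f(x+y)-f(x)-\chi_{B_1}(y)\langle y,\nabla f(x)\rangle)\,\nu(\dd y)$ one must control this integral uniformly and exploit the invariance of a suitable dense set under the resolvent; this, together with the strong continuity, is precisely where the Hille--Yosida and core machinery of Sato--Yamazato enters, and for these details I would refer to \cite{SY84} and \cite[Lemmas~17.1, 17.4 and Theorem~31.5]{Sat91}.
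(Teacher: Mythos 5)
Your proposal is correct and follows essentially the same route as the paper, which itself gives no independent argument but refers the entire proof to Sato--Yamazato \cite{SY84} (mirroring the scalar case of Proposition \ref{thm:satz1}, whose proof is likewise delegated to \cite[Lemmas 17.1 and 17.4]{Sat91}). Your sketch --- mild solution, characteristic function with exponent $\int_0^t \psi(e^{-As}z)\,\dd s$, the triplet substitutions $e^{-2cs}\mapsto e^{-2As}$, $e^{-cs}\mapsto e^{-As}$, the Feller property, and deferral of the core/closure step to \cite{SY84} and \cite{Sat91} --- is precisely the argument those references carry out, and you correctly flag the core property as the only genuinely delicate point.
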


One has that $P_t^{A}$ maps $C_{0}(\mathbb{R}^d)$
\,into it self and $$ \| P_t^A \|:=\sup_{\|f\|_u\leq 1} |f(x)| =1,$$
for any $t\geq0$. Moreover, for each $t>0$ and $x\in\mathbb{R}^d$,
$p_t^A(x,\,\cdot\,)$ is an infinitely divisible distribution such
that
\begin{equation}
    \hat{p}_t^{A}(x,z) = \mathrm{exp}\left\{i \langle x,\mathrm{e}^{-tA}z \rangle+ \int_{0}^{t}\log\hat{p}_{L_1}(\mathrm{e}^{-sA}z)
    \,\mathrm{d}s\right\},\,x,\,z\,\in\,\R^d.
\end{equation}
In particular, the generating triplet of $p_t^{A}(x,\,\cdot\,)$ is
an infinitely divisible distribution and is given by
$(Q_{t},\nu_t,\gamma_{t,x})$, where
\begin{equation}
\label{7.1eqa}\left\{
\begin{array}{lll}
    Q_t := \int\limits_{0}^{t} \mathrm{e}^{-sA}Q\mathrm{e}^{-sA}\,\mathrm{d}s, \\
    \nu_t(B) := \int_B\,(\int\limits_{0}^{t} \chi_{B_1}(\mathrm{e}^{-sA}x)\,ds)\,\nu(dx)\\
    \gamma_{t,x} := \mathrm{e}^{-tA}x + \int\limits_{0}^{t} \mathrm{e}^{-sA}\gamma\,\mathrm{d}s + \int\limits_{\mathbb{R}^d} \int\limits_{0}^{t} \mathrm{e}^{-sA}z
    \{ \chi_{B_1}(\mathrm{e}^{-sA}z) - \chi_{B_1}(z) \} \,\mathrm{d}s\,\nu(\mathrm{d}z).
\end{array}
\right.
\end{equation}

This process $X(t)$ is proven to have a modification $\tilde{X}(t)$
with c\`{a}dl\`{a}g paths (i.e.\ $P(X(t)=\tilde{X}(t))=1$ for all
$t\in[0,\infty)$, and $\tilde{X}_t$ is c\`{a}dl\`{a}g), see, e.g.,
\cite[Theorem 3.7]{Dyn65I}, \cite{Dyn65II,EthKur86,Chu82}.
Of course the classical Ornstein-Uhlenbeck process has a
modification with continuous paths.

For the generator $\mathcal{L}^{A}$    of the corresponding transition semigroup $P_t^{A}$ we have:
\begin{equation}
\mathcal{L}^{A} f(x)=\mathcal{L} + A \cdot \nabla,
\qquad \,\,on\,\,C^2_{0}(\R^d)\,\subset\,C_{0}(\R^d),
\end{equation}
where $\mathcal{L}$ has been defined in \eqref{eq:inf_genLevy}.
Moreover the formula for the characteristic function of $X(t)$ (solution of \ref{mmmm}))
becomes:
\begin{equation}
\mathbb{E}^x(e^{i\,\langle z,\,X(t)\rangle})={\rm exp}\left\{ i\,e^{-At}\,\langle
z,\,x\rangle + \int_0^t\,\psi(e^{-A(t-s)}\,z)\dd s\right\},
\end{equation}
with $\psi(z):=\log\hat{p}_{L_1}(z)$, for any $z\in\R^d$, as in
Proposition 2.2.

We shall now discuss the situation where there is an invariant
measure for the OU processes considered above, i.e. both $X^c$ and $X$. We start by $X^c(t)$.



\begin{prop}[{\cite[Theorem 1.75]{Sat91}}]
    \label{thm:satz3}
    Let $L(t)$ be as in Proposition \ref{thm:satz1} . If its L\'evy measure $\nu$ satisfies
    \begin{equation}
        \int\limits_{\left| x \right|>2} \log\left| x \right|\nu(\mathrm{d}x) < \infty
    \end{equation}
    then the Ornstein-Uhlenbeck process $X^c(t)$ on $\mathbb{R}^d$ with L\'{e}vy noise given by $L(t)$, generated by $(Q,\nu,\gamma,c)$, $c>0$ and solving
    (\ref{eq:inf_genLevy}), has a limit distribution for $t\,\longrightarrow\, +\infty $
given by
    \begin{equation}
        \hat{\mu}(z) =
\mathrm{exp}\left\{\int_{0}^{\infty}\psi(\mathrm{e}^{-c\, s}z)\,\mathrm{d}s\right\}, \quad z\in\mathbb{R}^d.
    \end{equation}
    This measure $\mu$ is self-decomposable (and in particular infinitely divisible), i.e.\ it satisfies the property that $\hat{\mu}(z)=\hat{\mu}(b^{-1}z)\hat{\nu}_b(z)$, for any $b>1$ and some probability measure $\nu_b$ on $\mathbb{R}^d$. \\
    The generating triplet $(Q_\infty,\nu_\infty,\gamma_\infty)$ of $\mu$ is given by
   \begin{equation}
\left\{
\begin{array}{lll}
        Q_\infty := \frac{1}{2c}Q \\
        \nu_\infty(B) := \frac{1}{c} \int\limits_{\mathbb{R}^d} \nu(\mathrm{d}y) \int\limits_{0}^{\infty} \chi_B(\mathrm{e}^{-s}y)\,\mathrm{d}s,
         \quad B\in\mathcal{B}(\mathbb{R}^d), \\
        \gamma_\infty\,:= \frac{\gamma}{c} + \frac{1}{c} \int\limits_{\left| y \right|>1} \frac{y}{\left| y \right|}\nu(\mathrm{d}y).
\end{array}
\right.
\end{equation}
\end{prop}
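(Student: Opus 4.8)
The plan is to read off the characteristic function of $X^c(t)$ from Proposition~\ref{thm:satz1} and pass to the limit $t\to+\infty$. By that proposition, for the process started at $x$ one has
\[
\int_{\R^d} e^{i\langle x,z\rangle}\,p_t^c(x,\dd y) = \exp\left[ie^{-ct}\langle x,z\rangle + \int_0^t \psi(e^{-cs}z)\,\dd s\right], \qquad z\in\R^d.
\]
Since $c>0$, the term $ie^{-ct}\langle x,z\rangle$ vanishes as $t\to+\infty$, uniformly on compacts in $z$, so everything reduces to showing that $\int_0^t\psi(e^{-cs}z)\,\dd s$ converges to a finite limit $\int_0^\infty\psi(e^{-cs}z)\,\dd s$. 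Once this is established, L\'evy's continuity theorem applied to the pointwise convergence of characteristic functions (together with continuity of the limit at $z=0$) yields weak convergence $p_t^c(x,\cdot)\to\mu$ with $\hat\mu(z)=\exp[\int_0^\infty\psi(e^{-cs}z)\,\dd s]$, independently of $x$, which is exactly the claimed limit distribution.

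The hard part --- and the only place where the hypothesis $\int_{|x|>2}\log|x|\,\nu(\dd x)<\infty$ enters --- is the convergence of $\int_0^\infty\psi(e^{-cs}z)\,\dd s$. I would split $\psi$ into its Gaussian, drift and jump parts from the L\'evy--Khinchine form \eqref{RAHH}. The Gaussian part contributes $-\tfrac12 e^{-2cs}\langle z,Qz\rangle$ and the drift part $ie^{-cs}\langle\gamma,z\rangle$; both decay exponentially in $s$ and integrate trivially, producing $Q_\infty=\tfrac{1}{2c}Q$ and a first contribution $\tfrac{\gamma}{c}$ to $\gamma_\infty$. For the jump part, after the rescaling $u=cs$ I would recognise the resulting $u$-integral of the rescaled jump integrand as integration against the candidate measure
\[
\nu_\infty(B):=\frac{1}{c}\int_{\R^d}\nu(\dd y)\int_0^\infty \chi_B(e^{-s}y)\,\dd s.
\]
The crucial step is to check that $\nu_\infty$ is a genuine L\'evy measure, i.e.\ that $\int_{\R^d}(|x|^2\wedge 1)\,\nu_\infty(\dd x)<\infty$. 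Carrying out the inner $s$-integral for fixed $y$ gives $\tfrac{|y|^2}{2}$ when $|y|\le 1$ and $\log|y|+\tfrac12$ when $|y|>1$; hence finiteness is equivalent to $\int_{|y|\le1}|y|^2\nu(\dd y)<\infty$ (automatic) together with $\int_{|y|>1}\log|y|\,\nu(\dd y)<\infty$, which is precisely the stated log-moment condition. This is the main obstacle, and it simultaneously guarantees absolute convergence of the truncated jump integral uniformly on compacts in $z$.

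It then remains to extract the triplet. Because the cut-off $\chi_{B_1}$ in $\psi$ is expressed in the original jump variable $y$ rather than in $x=e^{-s}y$, after the change of variables I would rewrite
\[
e^{i\langle z,x\rangle}-1-i\langle z,x\rangle\,\chi_{B_1}(e^{s}x) = \big(e^{i\langle z,x\rangle}-1-i\langle z,x\rangle\,\chi_{B_1}(x)\big) + i\langle z,x\rangle\big(\chi_{B_1}(x)-\chi_{B_1}(e^{s}x)\big).
\]
The first bracket, integrated against $\nu_\infty$, produces the canonical jump part with L\'evy measure $\nu_\infty$; the second term is a drift correction which, upon computing the $s$-integral explicitly (it vanishes for $|y|\le1$ and equals $\tfrac{y}{|y|}$ for $|y|>1$), contributes $\tfrac1c\int_{|y|>1}\tfrac{y}{|y|}\nu(\dd y)$, giving the stated $\gamma_\infty$. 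Finally, for self-decomposability I would exploit the shift structure of the defining integral: writing $b=e^{c\tau}$ with $\tau>0$ and substituting $s\mapsto s+\tau$,
\[
\hat\mu(b^{-1}z)=\exp\left[\int_\tau^\infty\psi(e^{-cs}z)\,\dd s\right], \qquad \frac{\hat\mu(z)}{\hat\mu(b^{-1}z)}=\exp\left[\int_0^\tau\psi(e^{-cs}z)\,\dd s\right]=\hat{p}_\tau^c(0,z),
\]
and the right-hand side is the characteristic function of the genuine probability measure $\nu_b:=p_\tau^c(0,\cdot)$ supplied by Proposition~\ref{thm:satz1}; this verifies $\hat\mu(z)=\hat\mu(b^{-1}z)\hat\nu_b(z)$ for every $b>1$, hence self-decomposability (and in particular infinite divisibility).
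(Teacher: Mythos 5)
Your proposal is correct, but note that the paper itself contains no argument to compare against: its entire ``proof'' of Proposition \ref{thm:satz3} is the citation to \cite[Theorem 17.5 i)]{Sat91}. What you have done is correctly reconstruct the standard argument behind that reference, starting from the explicit characteristic function in Proposition \ref{thm:satz1} and passing to the limit $t\to+\infty$. The key computations all check out: the Gaussian and drift parts of $\psi(e^{-cs}z)$ integrate to $-\frac{1}{4c}\langle z,Qz\rangle$ and $\frac{i}{c}\langle \gamma,z\rangle$, giving $Q_\infty=\frac{1}{2c}Q$ and the $\gamma/c$ contribution; your evaluation of $\int_0^\infty \bigl(e^{-2s}|y|^2\wedge 1\bigr)\,\dd s$ as $\frac{|y|^2}{2}$ for $|y|\le 1$ and $\log|y|+\frac12$ for $|y|>1$ correctly identifies the log-moment hypothesis as precisely the condition making $\nu_\infty$ a L\'evy measure (the discrepancy between the statement's cutoff $|x|>2$ and your $|y|>1$ is immaterial, since $\nu$ is finite on $\{1<|x|\le 2\}$ and $\log$ is bounded there); the cutoff-mismatch term $i\langle z,x\rangle\bigl(\chi_{B_1}(x)-\chi_{B_1}(e^{s}x)\bigr)$ with $x=e^{-s}y$ indeed vanishes for $|y|\le 1$ and integrates over $s\ge\log|y|$ to $y/|y|$ for $|y|>1$, which is exactly the second summand in $\gamma_\infty$; and the substitution $b=e^{c\tau}$ identifying $\hat\mu(z)/\hat\mu(b^{-1}z)$ with $\hat p^{c}_\tau(0,z)$ is the standard shift argument for self-decomposability, legitimate because $\hat\mu$, being an exponential, never vanishes. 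The only points you leave implicit --- the locally uniform (in $z$) domination needed both to pass $t\to\infty$ and to get continuity of $\hat\mu$ at $0$ for L\'evy's continuity theorem --- follow routinely from the absolute-convergence estimates you already set up, so I would count them as omitted details rather than gaps. In short: where the paper defers entirely to Sato, you supply a complete and correct proof along the same lines as the cited source.
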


\begin{proof}
    See \cite[Theorem 17.5 $i)$]{Sat91}.
\end{proof}

\begin{rem}
    In \cite[Theorem 17.5]{Sat91} a converse of this proposition is also proven.
\end{rem}

\begin{theo}\label{RAHBOU}
    An Ornstein-Uhlenbeck process with L\'{e}vy noise $L(t)$ satisfying the assumptions of Proposition \ref{thm:satz3} has a unique invariant invariant measure and this invariant measure is self-decomposable.
\end{theo}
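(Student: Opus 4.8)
The plan is to identify the unique invariant measure with the limit distribution $\mu$ furnished by Proposition~\ref{thm:satz3}; since that proposition already asserts the self-decomposability of $\mu$, the self-decomposability claim is immediate, and the real content is to verify that $\mu$ is invariant and that it is the only invariant probability measure.

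First I would establish invariance by a direct computation with characteristic functions. Let $X$ be a random variable with law $\mu$, and let $X^c(t)$ denote the solution of \eqref{ABDRA} started from $X$. Conditioning on $X=x$ and using the explicit transition characteristic function of Proposition~\ref{thm:satz1} together with $\psi(z)=\log\hat{p}^{c}_{L_1}(z)$, one finds
\begin{align*}
\mathbb{E}\big(e^{i\langle z,X^c(t)\rangle}\big)
&=\int_{\R^d}\exp\Big\{i\,e^{-ct}\langle x,z\rangle+\int_0^t\psi(e^{-cs}z)\,\dd s\Big\}\,\mu(\dd x)\\
&=\exp\Big\{\int_0^t\psi(e^{-cs}z)\,\dd s\Big\}\,\hat\mu(e^{-ct}z).
\end{align*}
Substituting the formula $\hat\mu(z)=\exp\{\int_0^\infty\psi(e^{-cs}z)\,\dd s\}$ and then changing variables via $u=s+t$ in $\hat\mu(e^{-ct}z)=\exp\{\int_0^\infty\psi(e^{-c(s+t)}z)\,\dd s\}=\exp\{\int_t^\infty\psi(e^{-cu}z)\,\dd u\}$, the two exponents combine as $\int_0^t+\int_t^\infty=\int_0^\infty$, so that $\mathbb{E}(e^{i\langle z,X^c(t)\rangle})=\hat\mu(z)$ for every $z\in\R^d$ and every $t\ge0$. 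By the uniqueness theorem for characteristic functions the law of $X^c(t)$ is again $\mu$; equivalently $\int_{\R^d}p_t^{c}(x,B)\,\mu(\dd x)=\mu(B)$ for all $B$ and $t$, which is precisely the invariance of $\mu$.

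For uniqueness, let $\mu'$ be any invariant probability measure and fix $f\in C_b(\R^d)$. Invariance yields $\int f\,\dd\mu'=\int (P_t^{c}f)\,\dd\mu'$ for every $t$, where $(P_t^{c}f)(x)=\int f(y)\,p_t^{c}(x,\dd y)$. Proposition~\ref{thm:satz3} guarantees that $p_t^{c}(x,\cdot)$ converges weakly to $\mu$ as $t\to\infty$ for each fixed $x$, hence $(P_t^{c}f)(x)\to\int f\,\dd\mu$ pointwise in $x$. Since $|P_t^{c}f|\le\|f\|_u$ and $\mu'$ is a probability measure, dominated convergence gives $\int f\,\dd\mu'=\int f\,\dd\mu$. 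As $f\in C_b(\R^d)$ was arbitrary, $\mu'=\mu$.

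The only delicate step is the exchange of limit and integral in the uniqueness argument; it is legitimate because $|P_t^{c}f|$ is bounded by $\|f\|_u$ uniformly in $t$ and $\mu'$ is finite, so dominated convergence applies with no extra hypotheses. The existence computation is routine once the explicit exponents are in hand, and the one genuinely essential ingredient---the independence of the weak limit from the starting point $x$---is supplied directly by Proposition~\ref{thm:satz3}.
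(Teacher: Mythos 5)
Your proof is correct, and it takes a partly different route from the paper's. For the invariance of $\mu$ the paper argues abstractly: it passes to the limit $s\to\infty$ in the Chapman--Kolmogorov identity $\int p_s(x,\dd y)\int p_t(y,\dd z)f(z)=\int p_{s+t}(x,\dd z)f(z)$, using the weak convergence $p_s(x,\cdot)\to\mu$ together with the (Feller-type) fact that $y\mapsto\int p_t(y,\dd z)f(z)$ is bounded continuous, so that the limit can be taken inside the outer integral. You instead verify invariance by a direct Fourier computation, exploiting the explicit formula of Proposition~\ref{thm:satz1} and the semiflow identity $\int_0^t\psi(e^{-cs}z)\,\dd s+\int_t^\infty\psi(e^{-cu}z)\,\dd u=\int_0^\infty\psi(e^{-cs}z)\,\dd s$ (the change of variables $u=s+t$ is legitimate since the log-moment hypothesis of Proposition~\ref{thm:satz3} makes the infinite integral converge). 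Your computation is self-contained and sidesteps the continuity-in-$x$ property of the semigroup on $C_b(\R^d)$, which the paper invokes without proof; the paper's argument, on the other hand, is more robust, since it works for any Markov semigroup with a limit distribution and a Feller property, with no explicit transition formula needed. For uniqueness the two arguments coincide in substance --- invariance plus pointwise convergence $(P_t^{c}f)(x)\to\int f\,\dd\mu$ and dominated convergence --- except that you spell out the domination $|P_t^{c}f|\le\|f\|_u$ explicitly, where the paper's statement ``$\lim_{t\to\infty}p_t^{*}\tilde\mu=\tilde\mu$'' is terse to the point of obscuring that this is exactly the step being used. The self-decomposability claim is, in both treatments, quoted directly from Proposition~\ref{thm:satz3}.
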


\begin{proof}[Proof ({\cite[page 112]{Sat91}})]
    From Proposition \ref{thm:satz3} there is a limit self-decomposable distribution $\mu$. \\
    On the other hand from the semigroup property of $(p_t)_{t\geq 0}$ (Chapman-Kolmo\-gorov equation) we have
    $\int_{\R^d} p_s(x,\mathrm{d}y)\int_{\R^d} p_t(y,\mathrm{d}z)f(z)=\int_{\R^d} p_{s+t}(x,\mathrm{d}z)f(z),\,f\,\in\,C_b(\R^d)$ and the continuity of
    $x\to\int p_t(x,\mathrm{d}z)f(z)$ as an operator on $C_b(\mathbb{R}^d)$, we have
    \begin{align*}
  \lim_{s\to \infty} \int_{\R^d} p_s(x,\mathrm{d}y)\int_{\R^d} p_t(y,\mathrm{d}z)f(z) = \int_{\R^d} \mu(\mathrm{d}y)\int_{\R^d} p_t(y,\mathrm{d}z)f(z)=\int_{\R^d}\,\mu(\mathrm{d}z)f(z),
\end{align*}
which shows that $\mu$ is invariant.

Uniqueness is shown by proving that if $\tilde{\mu}$ is another invariant measure then
$$\lim_{t \to \infty} p_t^* \tilde{\mu} = \tilde{\mu},$$
with $p_t^* $ the adjoint of $p_t $, and taking $t \rightarrow
+\infty$ we get $\int_{\R^d} f(y)\mu(\dd y) = \int_{\R^d} f(y)\tilde{\mu}(\dd y),\,for\,any\,\,f\,\in\,C_b(\mathbb{R}^d), $ i.e. $\mu =
\tilde{\mu}$.
\end{proof}
\begin{rem}
  As shown by \cite[Theorem 17.11]{Sat91} the condition in Theorem \ref{RAHBOU} is also necessary for having an invariant distribution.
\end{rem}

Now we turn to the existence and uniqueness of an invariant measure for the OU L\'evy process with drift coefficient $-A$, with $-A$  a non-negative symmetric real valued $d\times d$-matrix, i.e. to the process $X$ corresponding with equation \eqref{mmmm}.
We quote from \cite{SY84} the following result.
   \begin{prop} \label{PropositionA}
     Let $A$ be a real $d \times d$ matrix whose eigenvalues possess positive real parts.
If the L\'evy measure of the $L(t)$ of Proposition (\ref{thm:satz1}) satisfies
\begin{equation}\label{SERB}
     \int_{|y|> 1} \log|y| \nu (\dd y) < \infty,
\end{equation}
   then there exists a limit distribution $\mu$ for $(p_t^{A})_{t\geq 0}$ (with $p_t^{A}$ as in Proposition \ref{thm:satz2}).
   Moreover, $\mu$ is $Q$-selfdecomposable 
and is the unique invariant measure for the solution $X$ of equation \eqref{mmmm}, i.e.
    the Ornstein-Uhlenbeck process with drift coefficient $-A$ and L\'evy noise $L(t)$.

    In particular we have
\begin{equation}
     \hat{\mu} (z) = e^{ \int_0^\infty \log \hat{p}_{L_1} \left( e^{-s A^\star} z\right) ds} \:,
\end{equation}
    with $A^\star$ being the adjoint of $A$.

    The generating triplet for $\mu$ is thus given by $\left( Q_\infty, \nu_\infty,\gamma_\infty\right)$, where
\begin{equation*}
\begin{aligned}
 Q_\infty &= \int_0^\infty e^{-s A} Q e^{-s A^\star} \dd s,\\
    \nu_\infty\left(B\right)& = \int_B \int_0^\infty \left( \chi_{B_1}(e^{s A} x) \right) \dd s\,\nu(\dd x),\, B \in \mathcal{B}(\R^d),\\
  \gamma_\infty &= A^{-1}\gamma + \int_{\R^d} \int_0^\infty e^{-s A} z
\left(\chi_{B_1(0)} (e^{-s A} z) -\chi_{B_1(0)}(z)\right) \dd
s\, \nu (\dd z).
\end{aligned}
 \end{equation*}
    Conversely, every $Q$-selfdecomposable distribution can be realized in this way. The correspondence between $\mathcal{L}^A$ and $\mu$ is $1$-$1$.
    \end{prop}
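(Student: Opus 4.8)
The plan is to mirror the scalar-drift argument of Proposition \ref{thm:satz3} and Theorem \ref{RAHBOU}, replacing the contraction $e^{-cs}$ by the operator semigroup $e^{-sA}$ (respectively $e^{-sA^\star}$ once it is transferred across the pairing to act on the Fourier variable $z$). The starting point is the explicit characteristic function of the transition kernel furnished by Proposition \ref{thm:satz2}, which after moving the exponential across $\langle\cdot,\cdot\rangle$ reads
\begin{equation*}
\hat p_t^A(x,z) = \exp\left\{ i\langle x, e^{-tA^\star}z\rangle + \int_0^t \log\hat p_{L_1}(e^{-sA^\star}z)\,\dd s\right\},
\end{equation*}
and the whole proof amounts to letting $t\to\infty$ here and identifying the limit. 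Since every eigenvalue of $A$ has positive real part, there are constants $M\geq 1$ and $\omega>0$ with $\|e^{-sA}\|\leq M(1+s)^{d-1}e^{-\omega s}$ for all $s\geq 0$, the polynomial factor accounting for possible nontrivial Jordan blocks; in particular $e^{-tA}\to 0$, so the boundary term $i\langle x, e^{-tA^\star}z\rangle$ vanishes, which already indicates that the limit will be independent of the starting point $x$.

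First I would establish existence of the limit distribution $\mu$ by proving that $\int_0^\infty \log\hat p_{L_1}(e^{-sA^\star}z)\,\dd s$ converges for every $z\in\R^d$; its exponential, being continuous at the origin, is then a characteristic function by L\'evy's continuity theorem, and $\hat p_t^A(x,z)\to\hat\mu(z)=\exp\{\int_0^\infty\log\hat p_{L_1}(e^{-sA^\star}z)\,\dd s\}$ pointwise. Substituting the L\'evy--Khinchine exponent from \eqref{RAHH}, the Gaussian and drift contributions $-\tfrac{1}{2}\langle e^{-sA^\star}z, Q e^{-sA^\star}z\rangle$ and $i\langle\gamma, e^{-sA^\star}z\rangle$ are integrable on $[0,\infty)$ by the exponential decay of $\|e^{-sA^\star}\|$. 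The delicate term is the jump part $\int_0^\infty\psi_\nu(e^{-sA^\star}z)\,\dd s$: splitting the inner $\nu$-integral over $\{|y|\leq 1\}$ and $\{|y|>1\}$, the small-jump part is $O(|e^{-sA^\star}z|^2)$ and again integrable, while the large-jump part, after interchanging the $s$- and $y$-integrations by Tonelli and estimating $|e^{i\langle e^{-sA^\star}z,y\rangle}-1|$, is controlled by $\int_{|y|>1}(\log|y|)\,\nu(\dd y)$. This is exactly the hypothesis \eqref{SERB}, and its appearance is the main obstacle of the argument: one must track the logarithmic singularity produced when the exponentially shrinking argument $e^{-sA^\star}z$ meets the large-jump mass of $\nu$, and verify that the anisotropy of $e^{-sA}$ (as opposed to the isotropic $e^{-cs}$ of the scalar case) does not spoil the estimate, which is precisely where the norm bound above is used.

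Having established convergence, I would read off the triplet $(Q_\infty,\nu_\infty,\gamma_\infty)$ by matching the limiting exponent against the L\'evy--Khinchine form, interchanging integrals by Fubini and passing to the pushforward $w=e^{-sA}x$ in the jump integral; this yields $Q_\infty=\int_0^\infty e^{-sA}Q e^{-sA^\star}\dd s$, the stated $\nu_\infty$, and $\gamma_\infty=A^{-1}\gamma+\cdots$, with the $A^{-1}\gamma$ arising from $\int_0^\infty e^{-sA}\gamma\,\dd s$ and the remainder from the cut-off corrections. The $Q$-selfdecomposability then follows from the half-line structure of the exponent: for any $t>0$, writing $\int_0^\infty=\int_0^t+\int_t^\infty$ and substituting $s=t+u$ in the second integral gives
\begin{equation*}
\hat\mu(z)=\hat\mu(e^{-tA^\star}z)\cdot\exp\left\{\int_0^t\log\hat p_{L_1}(e^{-sA^\star}z)\,\dd s\right\},
\end{equation*}
where the second factor equals $\hat p_t^A(0,z)$, hence is a genuine infinitely divisible characteristic function; this is the operator analogue of the identity $\hat\mu(z)=\hat\mu(b^{-1}z)\hat\nu_b(z)$ of Proposition \ref{thm:satz3}.

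Finally, invariance and uniqueness are obtained exactly as in the proof of Theorem \ref{RAHBOU}: invariance from the Chapman--Kolmogorov identity together with the weak convergence $p_s(x,\cdot)\to\mu$ and the continuity of $y\mapsto\int p_t(y,\dd z)f(z)$ on $C_b(\R^d)$, and uniqueness from the fact that, for any invariant $\tilde\mu$, the relation $p_t^*\tilde\mu=\tilde\mu$ combined with $p_t^*\tilde\mu\to\mu$ (again because $e^{-tA}\to0$ erases the initial distribution) forces $\tilde\mu=\mu$. The converse direction---that every $Q$-selfdecomposable law arises in this way, with a $1$--$1$ correspondence to the generators $\mathcal L^A$---rests on the structure theory of operator-selfdecomposable distributions: given such a $\mu$, differentiating the decomposition identity at $t=0$ recovers the background driving exponent $\log\hat p_{L_1}$ and one checks that it defines an admissible infinitely divisible law, so that $\mu$ is the limit distribution of the corresponding equation \eqref{mmmm}. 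I would invoke \cite{SY84} for the details of this last step, where the bulk of the operator-theoretic work lies.
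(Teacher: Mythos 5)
Your proposal is correct, but it is far more self-contained than the paper's proof, which consists solely of the citation ``See \cite[pgg.~77--99]{SY84}'' and delegates everything --- existence of the limit, $Q$-selfdecomposability, invariance and uniqueness, the generating triplet, and the converse --- to Sato--Yamazato. You instead reconstruct the argument by transplanting the scalar-drift proofs of Proposition \ref{thm:satz3} and Theorem \ref{RAHBOU}: the convergence analysis of $\int_0^\infty \log\hat{p}_{L_1}(e^{-sA^\star}z)\,\dd s$ (exponential decay of $\|e^{-sA}\|$ up to a Jordan-block polynomial, the $2\wedge|\theta|$ estimate on the large-jump part producing exactly the log-moment condition \eqref{SERB}), the factorization $\hat{\mu}(z)=\hat{\mu}(e^{-tA^\star}z)\,\hat{p}_t^A(0,z)$ yielding operator-selfdecomposability, and the Chapman--Kolmogorov plus erasure-of-initial-condition argument for invariance and uniqueness are all sound, and they track the structure of the actual proofs in \cite{SY84}; what your route buys is a transparent account of precisely where \eqref{SERB} and the spectral hypothesis on $A$ enter, which the paper's bare citation hides. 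Two small points deserve an explicit sentence in a polished write-up: you must work with the distinguished logarithm of $\hat{p}_{L_1}$ (well defined since an infinitely divisible characteristic function never vanishes), and to invoke L\'evy's continuity theorem you need continuity of the limiting exponent at $z=0$, which your dominated bounds do furnish but which you only assert. For the converse direction and the $1$--$1$ correspondence with $\mathcal{L}^A$ you, like the paper, ultimately defer to \cite{SY84}; that is appropriate, since this is where the real operator-theoretic content lies. Incidentally, your pushforward derivation of the triplet gives $\nu_\infty(B)=\int_{\R^d}\int_0^\infty \chi_B(e^{-sA}y)\,\dd s\,\nu(\dd y)$, which silently corrects the garbled indicator $\chi_{B_1}(e^{sA}x)$ printed in the statement of the proposition.
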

    \begin{proof}
    See \cite[pgg. 77--99]{SY84}.
    \end{proof}

\begin{rem}
   \begin{itemize}
     \item[(1)]  If $\mu$ is infinitely divisible and is not a delta-distribution, then its support is unbounded (see \cite[ Corollary. 24.4]{Sat91}).
    \item[(2)] The condition \eqref{SERB} in Proposition \ref{PropositionA} is
    necessary. If it is not satisfied then the process has no
    invariant measure, see. \cite[Theorem 4.2]{SY84}.
    \item[(3)] If $\mu(a+V) < 1$ for any $a \in \R^d$ and any subspace $V \subset \R^d$ with $dim(V) \leq d-1$, (i.e. $\mu$ is non degenerate),
    then $\mu$ is absolutely continuous with respect to the Lebesgue measure on $\R^d$ (see \cite{Yama}). Nondegeneracy of $\mu$ is equivalent with
    $|\widehat{\mu} (z) | \leq 1- c_1 |z| ^2$, for any $|z| < c_2$, for some $c_1,c_2 >0$ (see \cite[Proposition 24.19]{Sat91}).
  \end{itemize}
\end{rem}
\begin{rem}
 See \cite[pag. 117-118]{Sato} for history of these results and additional references. See also \cite{SY84}
 for a very interesting survey of selfdecomposability and selfsimilarity with applications to Orstein-Uhlenbeck processes with L\'evy noise.

For criteria for selfdecomposability of measures on $\R^d$ see,
e.g., in \cite[Theorem 15.10]{Sat91}: they only involve the L\'evy
measure $\nu$. An example of a process of Ornstein-Uhlenbeck with
L\'evy noise having strictly $\alpha$-stable distribution $\mu$ is
given in \cite[Theorem 4.2]{CuPe}.
For $c= \frac{1}{\alpha}, \, \alpha > 0$, defining $Y(t) = e^{-
\frac{t}{\alpha}} L(e^t)$ we have for any $t_0$, that $X(t_0+t)$, $
t \geq 0$ is an Ornstein- Uhlenbeck process of L\'evy type
(associated with $L(t)$ and c), and $p_{L(1)} = p_{X(t)}$, for all $
t \geq 0$ (see \cite{Bre68,Bre70}). The condition in Proposition
\ref{thm:satz3} implies that the associated Ornstein-Uhlenbeck
process with L\'evy type process $X(t)$ is recurrent (cfr. \cite[p.
272]{Sat91}).
\end{rem}


\subsection{Perturbations by non linear drifts: an analytic approach}\label{Section2.2}

Let $\mu$ be a probability measure on $\mathbb{R}^d$. At the
beginning of section $2.1$ we recalled that, if $(P_t)_{t\geq 0}$ is
a one parameter strongly continuous contraction semigroup on
$L^2(\mu)$, then the measure $\mu$ is invariant for $(P_t)_{t\geq
0}$ if
\begin{equation*}
    \dint_{\mathbb{R}^d}(P_tf)(x)\mu(\dd x) = \dint_{\mathbb{R}^d}f(x)\mu(\dd x), \qquad \forall \ f \in
    L^2(\mu).
\end{equation*}
 This in turn is equivalent to:
\begin{equation*}
    P_t^{\ast}1=1\,,\quad \forall t \geq 0,
\end{equation*}
where $P_t^{\ast}$ is the adjoint semi-group acting in $L^2(\R^d;
\dd \mu)$ and $1$ is the function identically 1 in $L^2(\mu)$.
If $L_0$ is an operator in $L^2(\R^d; \dd\mu)$ defined on a dense domain $D(L_0)$ then $\mu$ is said to be $\left( L_0 , D(L_0) \right)$-invariant if $\int_{\R^d} L_0f\,d\mu=0$, for all $f\in D(L_0)$. If $L$ with domain $D(L)$ is the generator of a one parameter strongly continuous contraction semigroup $(P_t)_{t\geq0}$ on $L^2(\R^d, \dd\mu)$ and if $\mu$ is $\left(L,D(L)\right)$-invariant then $\mu$ is also said to be  infinitesimal invariant under $(P_t)_{t\geq 0}$. \\
Note that invariance implies infinitesimal invariance, but in
general infinitesimal invariance does not imply invariance except
for symmetric processes, see, e.g., \cite{ABRW,Bhaka,BeSc,Eche,AMR}.

Consider the L\'{e}vy type operator $\left(L_0,S(\R^d)\right)$ acting on
$\mathcal{S}(\mathbb{R}^d)$ functions:
\begin{equation}\label{eqn:levyoperator}
    (L_0f)(x) = a_1(\Delta f)(x) + \beta(x)(\nabla f)(x) + a_2 \int\limits_{\mathbb{R}^d}[f(x+y)-f(x)]\nu_{\alpha}(\dd y)
\end{equation}
where $a_1\geq0$, $a_2\geq0$, $a_1+a_2>0$,
$\beta:\mathbb{R}^d\to\mathbb{R}^d$ is Borel measurable, locally
Lipschitz bounded and such that the Fourier transform $\hat{\beta}$
of $\beta$ exists and $\nu_{\alpha}(\dd y):=\frac{\dd y}{\left| y
\right|^{d+\alpha}}$, $\alpha\in(0,2)$ is a stable L\'{e}vy measure.

We recall that a stable L{\'e}vy process is a stochastic process
whose characteristic exponents correspond to those of distributions
$Y$ (they are called stable distributions, introduced by P. L{\'e}vy
in \cite{Le24} and \cite{Le25}) such that for all $ n \in \N$ the
following holds:
\begin{equation}
 \sum_{k=1}^n Y_k \stackrel{d}{=} \tilde{a}_n Y +\tilde{b}_n \; ,
\end{equation}
where $Y_1, \ldots, Y_n$ are independent copies of $Y$, while $\tilde{a}_n
>0$, $\tilde{b}_n$ are real constants. See, e.g., \cite{Sat91} for the
discussion of stable L{\'e}vy measure.

If $f$ is a function on $\mathbb{R}^d$ we define the Fourier
transform $\hat{f}$ of $f$, by:
\begin{equation}
\hat{f}(k)=\dint_{\mathbb{R}^d}e^{ikx}{f}(x)\,\dd x,\,\,k\,\,\in\,\R^d.
\end{equation}
similarly for $f(x)\,\dd x$ replaced by a measure $\nu$ respectively
a distribution, whenever the transforms exists, in the corresponding sense.
\begin{prop}
    Let $L_0$ be a L\'evy operator of the form \eqref{eqn:levyoperator} and let $\mu$ be a probability measure on $\mathbb{R}^d$.
    Then $L_0$ can be seen as a densely defined operator on $L^2(\mathbb{R}^d,\mu)$, with $D(L_0)=\mathcal{S}(\mathbb{R}^d)$.

    If $\widehat{\beta\mu}$ exists, then $\mu$ is $(L_0,S(\mathbb{R}^d))$-invariant if $\mu$ satisfies:
    \begin{align*}
        \dint_{\mathbb{R}^d}\hat{f}(k)\hat{L}_0(k)\hat{\mu}(\dd k) = \frac{i}{(2\pi)^{\frac{d}{2}}}\int\limits_{\mathbb{R}^d}\hat{f}(k)k\widehat{\beta\mu}(\dd k),
         \quad \forall f \in S(\mathbb{R}^d)
    \end{align*}
    where
    \begin{align*}
        \hat{L}_0(k) :&= \frac{1}{(2\pi)^{\frac{d}{2}}} \left[ -a_1\left| k \right|^2 + a_2c_{\alpha}\left| k \right|^{\alpha} \right],
        \qquad \alpha\in(0,2)\\
\intertext{and}
c_{\alpha}&=\,c_{\alpha}(u)\,\dint_{\mathbb{R}^d \backslash
        \{0\}}\,\cos{(\langle\,u,\,y\rangle-1)}\,\nu_{\alpha}(dy),
    \end{align*}
for some unit vector $u \in\R^d.$
\end{prop}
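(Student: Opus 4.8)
The plan is to read off $(L_0,\mathcal{S}(\R^d))$-invariance from its definition, $\int_{\R^d}L_0 f\,\dd\mu=0$ for every $f\in\mathcal{S}(\R^d)$, and to transport this identity to the Fourier side, where each of the three pieces of $L_0$ becomes a multiplier acting on $\hat f$. Before doing so I would justify the preliminary claim that $L_0$ is densely defined on $L^2(\R^d,\mu)$ with domain $\mathcal{S}(\R^d)$: since $\mu$ is a probability measure, every bounded function lies in $L^2(\R^d,\mu)$, so $\mathcal{S}(\R^d)\subset L^2(\R^d,\mu)$, and density follows from $C_c^\infty(\R^d)\subset\mathcal{S}(\R^d)$ together with the standard approximation of $L^2(\mu)$ functions by continuous, compactly supported ones and subsequent mollification. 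That $L_0$ maps $\mathcal{S}(\R^d)$ into $L^2(\R^d,\mu)$ is clear for $a_1\Delta f$ (again Schwartz) and for $\beta\cdot\nabla f$ (bounded, as $\beta$ is bounded and $\nabla f\in\mathcal{S}(\R^d)$); for the nonlocal term I would use the symmetry of $\nu_\alpha$ to rewrite $\int[f(x+y)-f(x)]\nu_\alpha(\dd y)=\tfrac12\int[f(x+y)+f(x-y)-2f(x)]\nu_\alpha(\dd y)$ and bound the integrand by $C(|y|^2\wedge 1)$, which is $\nu_\alpha$-integrable for $\alpha\in(0,2)$.

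The core computation is to insert the Fourier inversion formula $f(x)=(2\pi)^{-d}\int_{\R^d}\hat f(k)\,\mathrm e^{-\mathrm i\langle k,x\rangle}\,\dd k$ into $\int L_0 f\,\dd\mu$ and exchange the order of integration, which is legitimate because $\hat f\in\mathcal{S}(\R^d)\subset L^1(\R^d)$ while $\hat\mu$ and $\widehat{\beta\mu}$ are bounded. The differential parts pass through $\widehat{\Delta f}(k)=-|k|^2\hat f(k)$ and $\widehat{\partial_j f}(k)=-\mathrm i k_j\hat f(k)$, so that $a_1\int\Delta f\,\dd\mu$ produces the multiplier $-a_1|k|^2$ against $\hat\mu$, while $\int\beta\cdot\nabla f\,\dd\mu$ yields $(2\pi)^{-d}\int\hat f(k)(-\mathrm i k)\cdot\widehat{\beta\mu}(-k)\,\dd k$, where $\widehat{\beta\mu}$ is the Fourier transform of the finite vector measure $\beta\,\dd\mu$; this is the one place the hypothesis that $\widehat{\beta\mu}$ exists is invoked (it is automatic here since $\beta$ is bounded). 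For the nonlocal part the same insertion produces the symbol $\int_{\R^d\setminus\{0\}}(\mathrm e^{-\mathrm i\langle k,y\rangle}-1)\nu_\alpha(\dd y)$, which by the symmetry of $\nu_\alpha$ reduces to $\int(\cos\langle k,y\rangle-1)\nu_\alpha(\dd y)$; the scaling $y\mapsto y/|k|$, using $\nu_\alpha(\dd y)=|y|^{-d-\alpha}\dd y$ and the rotational invariance of $\nu_\alpha$, extracts the factor $|k|^\alpha$ and leaves the constant $c_\alpha$ expressed through an arbitrary unit vector $u$, exactly as in the statement.

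Collecting the three contributions, the condition $\int L_0 f\,\dd\mu=0$ for all $f$ becomes the equality of the local-plus-nonlocal term with the drift term; moving the latter to the right-hand side reproduces the asserted identity, with $\hat L_0(k)=(2\pi)^{-d/2}\big[-a_1|k|^2+a_2 c_\alpha|k|^\alpha\big]$ (the sign being consistent once one notes that $c_\alpha=\int(\cos\langle u,y\rangle-1)\nu_\alpha(\dd y)\le 0$), up to the normalization convention for the Fourier transform. Since every step is reversible, the Fourier identity is in fact equivalent to invariance, which in particular gives the stated sufficiency. I expect the main obstacle to be the rigorous handling of the nonlocal term: for $\alpha\in[1,2)$ the difference $f(x+y)-f(x)$ is not absolutely $\nu_\alpha$-integrable, so the symmetrized (principal-value) form must be used consistently and the corresponding Fubini interchange justified, and one must check that the scaling argument yielding $|k|^\alpha$ is uniform enough to permit exchanging the $k$- and $y$-integrations.
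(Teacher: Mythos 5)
Your proposal is correct, but note that the paper itself offers no computation: its ``proof'' of this proposition consists of a citation to \cite{ABRW} and \cite{BeSc}, where the result is established under the additional assumption that $\mu$ has a density, with the general case declared ``analogously proven.'' What you have written out is essentially that delegated argument, executed directly at the level of measures: the preliminary domain claim (each of the three pieces of $L_0 f$ is bounded, the jump part via the symmetrized second difference dominated by $C\left(|y|^2\wedge 1\right)$, and $\mathcal{S}(\R^d)$ is dense in $L^2(\mu)$ for any finite Borel measure), then Fourier inversion under $\int L_0 f\,\dd\mu$ with Fubini justified by $\hat f\in L^1$ against the bounded transforms $\hat\mu$ and $\widehat{\beta\mu}$, and the dilation/rotation argument extracting $a_2 c_\alpha |k|^\alpha$ from the jump symbol. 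Your version buys two things over the paper's: it is self-contained, and by phrasing everything through $\hat\mu$ and $\widehat{\beta\mu}$ as transforms of finite (vector) measures it covers general $\mu$ with no absolute-continuity detour. You also correctly isolate the only genuinely delicate point: for $\alpha\in[1,2)$ the integrand $f(x+y)-f(x)$ is not absolutely $\nu_\alpha$-integrable, so the principal-value form must be carried through the interchange of integrals; the bound $2|\cos\langle k,y\rangle-1|\leq\min\left(4,|k|^2|y|^2\right)$, integrable against $|\hat f(k)|\,\dd k\otimes\nu_\alpha(\dd y)\otimes\mu(\dd x)$ and yielding a quantity of order $|k|^\alpha$, settles both the Fubini step and the uniformity of the scaling argument you worried about. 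Two cosmetic remarks: your reading of $c_\alpha$ silently corrects a typo in the statement (the paper's $\cos{(\langle u,y\rangle-1)}$ should be $\cos\langle u,y\rangle-1$, whence $c_\alpha\leq 0$ as you observe), and the residual sign/reflection bookkeeping ($\hat\mu(-k)$ versus $\hat\mu(k)$, $(2\pi)^{-d}$ versus $(2\pi)^{-d/2}$) cannot be pinned down more sharply than you do, because the paper's own Fourier normalization in \eqref{eqn:levyoperator} and in the displayed identity is internally inconsistent; finally, as you note, the hypothesis that $\widehat{\beta\mu}$ exists is automatic under the proposition's standing assumption that $\beta$ is bounded and $\mu$ is a probability measure.
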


\begin{proof}
    The proof is given in \cite{ABRW} and \cite{BeSc} assuming $\mu$ has a density, and the general case is analogously proven.
\end{proof}

\begin{example}
    Let us take $a_1=0$, $\beta(x)=-x,\,\, x\,\in\,\R^d,$ and $L_0=a_2C_{\alpha}(-\Delta)^{\frac{\alpha}{2}}-x\cdot\nabla$ on $S(\R^d)$. \\
    The  $\left( L_0,D(_0L) \right)$ invariant measure is then given by $\mu(\dd x)=\rho_2(x)\dd x$ with $\hat{\rho}_2(k)=e^{-\frac{1}{\alpha}a_2c_{\alpha}\left| k \right|^{\alpha}},\,k\,\in\,\R^d$.
\end{example}
We shall now present a more systematic study of perturbation of
L{\'e}vy generators by non linear drifts using {\it ground state
transformations}, a concept which we first explain in the Gaussian
case:

\begin{prop} \label{prop:proposition1}
    Let $L_0$ be given by \eqref{eqn:levyoperator} with $a_2=0$ and $\beta(x)=-x,\,\,x\,\in\,\R^d$, i.e.\ $L_0=\Delta-x\cdot\nabla$, with domain
     $D(L_0)=\mathcal{S}(\mathbb{R}^d)$. Then:
    \begin{enumerate}
        \item\label{itm:item1} The adjoint of $L_0$ in $L^2(\mathbb{R}^d)$ is $\Delta+x\cdot\nabla+d$.
        \item\label{itm:item2} $\mu(\dd x)=\rho(x)\dd x$ with $\rho(x)=\frac{e^{-\frac{x^2}{2}}}{(2\pi)^{\frac{d}{2}}}$ is $\left(L_0,S(\R^d)\right)$-invariant.
        \item\label{itm:item3} The adjoint of $(L_0,D(L_0))$ in $L^2(\mathbb{R}^d,\mu),$ with $\mu$ as in 2., is equal to $L_0$ on $D(L_0)$.
         Thus $(L_0,D(L_0))$ is symmetric as an operator acting in $L^2(\mathbb{R}^d,\mu)$.
        \item\label{itm:item4}  The closure $\overline{L}_0$ with domain $D(\overline{L}_0)$ of $(L_0,D(L_0))$ in $L^2(\R^d, \mu)$ is self-adjoint in \\ $L^2(\mathbb{R}^d,\mu)$.
        \item\label{itm:item5}  $\mu$ is invariant under the strongly continuous contraction semigroup
        $e^{t\overline{L}_0},\,\,t\,\geq\,0$, in $L^2(\R^d, \mu).$
    \end{enumerate}
\end{prop}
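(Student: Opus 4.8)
The plan is to prove the five assertions essentially in the stated order, since each relies on the previous ones. The operator is $L_0 = \Delta - x\cdot\nabla$ acting on the Schwartz class $\mathcal{S}(\R^d)$, and the whole proposition is an instance of the classical fact that the Ornstein--Uhlenbeck generator is symmetric with respect to its own Gaussian invariant measure. First I would establish \eqref{itm:item1} by a direct integration by parts in $L^2(\R^d, \dd x)$ (Lebesgue measure): for $f,g \in \mathcal{S}(\R^d)$, self-adjointness of $\Delta$ gives $\int (\Delta f)\,g\,\dd x = \int f\,(\Delta g)\,\dd x$, while for the drift term the product rule yields $\int (x\cdot\nabla f)\,g\,\dd x = -\int f\,\nabla\cdot(xg)\,\dd x = -\int f\,(x\cdot\nabla g + d\,g)\,\dd x$. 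Collecting terms shows $L_0^{\star} = \Delta + x\cdot\nabla + d$ on $\mathcal{S}(\R^d)$; the boundary terms vanish because Schwartz functions decay rapidly.

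For \eqref{itm:item2} I would verify the infinitesimal invariance condition $\int_{\R^d} L_0 f\,\dd\mu = 0$ for all $f \in \mathcal{S}(\R^d)$, using the definition of invariance recalled in Section~\ref{Section2.2}. Writing $\dd\mu = \rho\,\dd x$ with $\rho(x) = (2\pi)^{-d/2} e^{-x^2/2}$, this amounts to $\int (L_0 f)\,\rho\,\dd x = \int f\,(L_0^{\star}\rho)\,\dd x = 0$, so it suffices to check that $\rho$ lies in the kernel of the formal adjoint $L_0^{\star} = \Delta + x\cdot\nabla + d$ computed in step one. A short computation using $\nabla\rho = -x\rho$ and $\Delta\rho = (x^2 - d)\rho$ gives $\Delta\rho + x\cdot\nabla\rho + d\rho = (x^2 - d)\rho - x^2\rho + d\rho = 0$, as required.

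Assertion \eqref{itm:item3} is the heart of the matter: I would compute the adjoint of $(L_0, \mathcal{S}(\R^d))$ in the weighted space $L^2(\R^d,\mu)$ rather than in $L^2(\R^d,\dd x)$. The clean way is to show $\int (L_0 f)\,g\,\dd\mu = \int f\,(L_0 g)\,\dd\mu$ for $f,g \in \mathcal{S}(\R^d)$, i.e. that $L_0$ is already symmetric in $L^2(\mu)$. I expect this to follow by rewriting the form as $\int (L_0 f)\,g\,\rho\,\dd x$ and integrating by parts so that the first-order and second-order contributions reorganize into the manifestly symmetric Dirichlet-type expression $-\int \nabla f\cdot\nabla g\,\rho\,\dd x$; the cross terms involving the drift combine with the derivatives of $\rho$ precisely because $\nabla\rho = -x\rho$, which is exactly the algebraic identity that makes the Gaussian the invariant density. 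This is the step I expect to be the main obstacle, since one must track the boundary terms and the weight $\rho$ carefully, but the decay of Schwartz functions against the Gaussian weight makes every boundary contribution vanish.

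The last two claims are then essentially soft consequences. For \eqref{itm:item4} I would invoke essential self-adjointness: $(L_0, \mathcal{S}(\R^d))$ is symmetric and nonpositive in $L^2(\mu)$ by \eqref{itm:item3} and the Dirichlet-form representation, and it is a standard fact (the number operator / Ornstein--Uhlenbeck generator has the Hermite functions as a complete set of eigenfunctions spanning a core) that its closure $\overline{L}_0$ is self-adjoint; alternatively one appeals to the Lumer--Phillips criterion recalled in Section~1.2, checking that the range of $\mathbbm{1} - L_0$ is dense. Finally, for \eqref{itm:item5}, since $\overline{L}_0$ is self-adjoint and nonpositive, the spectral theorem yields the $C_0$-contraction semigroup $e^{t\overline{L}_0}$, and invariance of $\mu$ under $e^{t\overline{L}_0}$ follows from infinitesimal invariance \eqref{itm:item2} together with symmetry via the equivalence $P_t^{\star}1 = 1 \iff \int P_t f\,\dd\mu = \int f\,\dd\mu$ stated at the start of Section~\ref{Section2.2}: symmetry gives $P_t^{\star} = P_t$, and $L_0^{\star}1 = 0$ in $L^2(\mu)$ forces $P_t^{\star}1 = 1$, hence $\mu$ is invariant.
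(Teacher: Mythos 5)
Your steps (1), (2), (3) and (5) are correct and follow essentially the same route as the paper: (1) is the same integration by parts with $\nabla\cdot x = d$; (2) is the paper's computation with $g=\rho$, using $\nabla\rho=-x\rho$ and $\Delta\rho=(x^2-d)\rho$; (5) is the paper's argument (self-adjointness of the semigroup plus $\overline{L}_0 1=0$, so $e^{t\overline{L}_0}1=1$ and $\int e^{t\overline{L}_0}f\,\dd\mu=(e^{t\overline{L}_0}1,f)_{L^2(\mu)}=\int f\,\dd\mu$). In (3) you deviate slightly: instead of the paper's direct expansion of $\int (L_0f)\,g\rho\,\dd x$ into $\int f(\Delta g - x\cdot\nabla g)\rho\,\dd x$, you reorganize the form into the manifestly symmetric expression $-\int \nabla f\cdot\nabla g\,\dd\mu$. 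This is a legitimate and in fact slightly better variant, since it gives you symmetry and nonpositivity of the form in one stroke, which you then correctly reuse in (4) and (5); the underlying algebra ($\nabla\rho=-x\rho$ absorbing the drift) is identical to the paper's.

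The one genuine soft spot is your step (4). The "standard fact" you cite --- that the Ornstein--Uhlenbeck generator has "the Hermite functions as a complete set of eigenfunctions spanning a core" --- does not work as literally stated for the operator $(L_0,\mathcal{S}(\R^d))$ in $L^2(\R^d,\mu)$: the eigenfunctions of $L_0$ in $L^2(\mu)$ are the \emph{Hermite polynomials}, which do not decay and hence do not belong to the declared domain $\mathcal{S}(\R^d)$, so the eigenvector/analytic-vector criterion cannot be applied directly to $(L_0,\mathcal{S}(\R^d))$. The Hermite \emph{functions} (polynomial times Gaussian), which are Schwartz, are eigenfunctions not of $L_0$ in $L^2(\mu)$ but of the harmonic-oscillator-type operator $\Delta-x^2-d$ in $L^2(\R^d,\dd x)$. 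This is precisely why the paper's proof of (4) passes through the unitary ground state transformation $U:L^2(\R^d)\to L^2(\R^d,\mu)$, $Uf=f/\sqrt{\rho}$, under which $U^{-1}(\Delta-x\cdot\nabla)Uf=(\Delta-x^2-d)f$: essential self-adjointness of $\Delta-x^2-d$ on $\mathcal{S}(\R^d)$ (Hermite functions being analytic vectors lying in $\mathcal{S}(\R^d)$) then transfers by unitary equivalence to $(L_0,\mathcal{S}(\R^d))$ in $L^2(\mu)$. Your alternative via the Lumer--Phillips route is sound in principle, but for a symmetric nonpositive operator, density of the range of $\mathbbm{1}-L_0$ is exactly the statement that the deficiency indices vanish, i.e.\ it is equivalent to essential self-adjointness itself; asserting it without verification (e.g.\ by solving $(\mathbbm{1}-L_0)u=h$ within the closure of $\mathcal{S}(\R^d)$, or by a core argument using invariance of $\mathcal{S}(\R^d)$ under the Mehler semigroup) leaves the claim unproved. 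So to close step (4) you need either the conjugation argument the paper uses, or an actual verification of the range condition.
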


\begin{proof}
   {\it Point \ref{itm:item1}}.
       For any $f,g\in\mathcal{S}(\mathbb{R}^d)$ we have, integrating by parts:
                    \begin{eqnarray}\label{eqn:equation1bis}
                        \int L_0f(x)g(x)\,\dd x &=& \int \left[ (\Delta-x\cdot\nabla)f(x) \right] g(x)\,\dd x \nonumber \\
                        &=& \int f(x)\Delta g(x)\,\dd x + \int f(x)\nabla(xg(x))\,\dd x \nonumber \\
                        &=& \int f(x)\Delta g(x)\,\dd x + \int f(x)(\nabla x)g(x)\,\dd x + \int f(x)x\nabla g(x)\,\dd x \hspace*{1cm} \label{eqn:equation2} \\
                        &=& \int f(x)\Delta g(x)\,\dd x + d \cdot \int f(x)g(x)\,\dd x + \int f(x)x\nabla g(x)\,\dd x, \nonumber
                    \end{eqnarray}
                    where we also used $\nabla x=d$.  This finishes the proof of \eqref{itm:item1}.

        {\it Point \ref{itm:item2}}. If we take $g=\rho$ in \eqref{eqn:equation2} we get
                    \begin{eqnarray}
                        \int L_0f(x)\rho(x)\,\dd x &=& \int L_0f(x)\mu(\dd x) \nonumber \\
                        &=& \int f(x)\Delta\rho(x)\,\dd x + d\int f(x)\rho(x)\,\dd x + \int f(x)x\nabla\rho(x)\,\dd x. \hspace*{8mm}\label{eqn:equation3}
                    \end{eqnarray}
                    But $\nabla\rho(x)=(-x)\rho(x)$,
                    \begin{equation}
                        \Delta\rho(x) = (-d)\rho(x)-x\nabla\rho(x) = (-d)\,\rho(x)+x^2\rho(x).
                        \label{eqn:equation4}
                    \end{equation}
                    From \eqref{eqn:equation3},\eqref{eqn:equation4} it follows
                    \begin{equation}
                        \int L_0f(x)\mu(\dd x) = \int f(x) \left[ (-d)\,\rho(x)+x^2\rho(x)+(d)\,\rho(x)-x^2\rho(x) \right]\,\dd x = 0.
                    \end{equation}
                    Hence $\mu$ is $(L_0,\mathcal{S}(\mathbb{R}^d))$-invariant.

        {\it Point \ref{itm:item3}}. We have, for any $f,g\in\mathcal{S}(\mathbb{R}^d)$, using \eqref{eqn:equation1bis} with $g$ replaced by $g\rho$:
                    \begin{eqnarray} \
                        \int (L_0f)(x)g(x)\rho(x)\,\dd x &=& \int f(x)(\Delta+x\cdot\nabla+d) \left( g(x)\rho(x)\right)\,\dd x \nonumber \\
                        &=& \int f(x)(\Delta g(x))\rho(x)\,\dd x + \int f(x)2\nabla g(x)\nabla\rho(x)\,\dd x\,\hspace*{1cm} \label{eqn:equation5} \\
                        &\quad +& \int f(x)g(x)\Delta\rho(x)\,\dd x + \int f(x)x(\nabla g(x))\rho(x)\,\dd x \nonumber \\
                        &\quad +& \int f(x)xg(x)\nabla\rho(x)\,\dd x + (d)\,\int g(x)\rho(x)\,\dd x \nonumber
                    \end{eqnarray}
                    Inserting the expressions \eqref{eqn:equation3} and \eqref{eqn:equation4} for $\nabla \rho$, resp. $\Delta\rho$,  into \eqref{eqn:equation5} we get:
                    \begin{eqnarray}
                        \int L_0f(x)g(x)\rho(x)\,\dd x &= \int f(x)\Delta g(x)\rho(x)\,\dd x + 2\int f(x)\nabla g(x)(-x)\rho(x)\,\dd x \nonumber\\
                        &\quad - (d)\,\int f(x)g(x)\rho(x)\,\dd x + \int f(x)x^2g(x)\rho(x)\,\dd x \nonumber\\
                        &\quad + \int f(x)x(\nabla g(x))\rho(x)\,\dd x + \int f(x)xg(x)(-x)\rho(x)\,\dd x \nonumber \\
                        &\quad + (d)\int g(x)\rho(x)\,\dd x \nonumber\\
                        &= \int f(x)\Delta g(x)\rho(x)\,\dd x - \int f(x)x\cdot\nabla g(x)\rho(x)\,\dd x,
                    \end{eqnarray}
                    which proves \ref{itm:item3}.

        {\it Point \ref{itm:item4}}. This is proven by the unitary ``ground state transformation''\ $U:L^2(\mathbb{R}^d)\longrightarrow\,L^2(\mathbb{R}^d,\mu)$
        defined by $f\in L^2(\mathbb{R}^d)\to U\,f\,\in L^2(\mathbb{R}^d,\mu)$, $Uf=\frac{f}{\sqrt{\rho}}$. \\
                    By this transformation we have, for any $f\in\mathcal{S}(\mathbb{R}^d)$:
                    \begin{equation}
                        U^{-1}(\Delta-x\cdot\nabla){U}f = (\Delta-x^2-d)f,
                    \end{equation}
                    as easily seen, and since $\Delta-x^2-d$ is essentially self-adjoint on $\mathcal{S}(\mathbb{R}^d)$
                    (the Hermite functions being analytic vectors for it), hence also the unitary equivalent operator
                    $\Delta-x\cdot\nabla$, restricted to $\mathcal{S}(\mathbb{R}^d)$ is essentially self-adjoint
                    (where we use that $U$ maps $\mathcal{S}(\mathbb{R}^d)$ into itself), hence its closure $\overline{L}_0$ is self-adjoint
                    (for such concepts see, e.g.,\cite{RS}).

         {\it Point \ref{itm:item5}}.  By the definition of $\mu$ invariant under $(P_t)_{t\geq 0}$ one has to prove $\int f\,d\mu=\int e^{t\overline{L}_0}f\,d\mu$, for all
        $t\geq0,f\in\mathcal{S}(\mathbb{R}^d)$. This can be proven by realizing that the
        right hand side is equal to $(e^{t\overline{L}_0}1,f)_{L^2(\mu)}$, where we used that $e^{t\overline{L}_0}$ is self adjoint, and $e^{t\overline{L}_0}1=1$, as seen by expansion in powers of $t$
        and using the fact that $\overline{L}_0^n1=0$, for all $n\in\mathbb{N}$.
\end{proof}
$\overline{L}_0$ is the well known generator of an
Ornstein-Uhlenbeck semigroup (and diffusion process) in $L^2(\R^d,
\mu)$, the corresponding invariant measure $\mu$ given by Prop.
\ref{prop:proposition1}, 2, is the stationary measure for the
Ornstein-Uhlenbeck process in $\R^d$.

Let us now derive corresponding results for an operator defined on
the Schwartz space of test functions $S(\R^d)$ by
\begin{equation}
 L^{(\beta)} = \Delta + \beta(x) \cdot \nabla \:,\: D(L^{(\beta)}) = S(\R^d)\:.
\end{equation}
We assume that $\beta(x) \cdot \nabla f $ is well defined for all $f
\in S(\R^d)$. Note that $L^{(\beta)} =L_0$, with $L_0$ as in
\ref{prop:proposition1}, if $\beta(x)=-x$. We have the following

\begin{prop} \label{proposition2}
   \begin{itemize}
    \item [(i)] If $\beta$ is such that both $\beta(\cdot) \nabla f$ and $(\nabla\beta) \cdot f$ are well defined in $L^2(\R^d)$, for all $f \in S(\R^d)$, then  the adjoint of $L^{(\beta)}$  (looked upon as an operator) in $L^2(\R^d)$ is given by
    \begin{equation}\Delta -\beta(x) \cdot \nabla
    -(\nabla\,\beta(x))
    \:,
    \end{equation}
    where $\nabla (\beta(x))=\,div\,\beta(x)$ is the divergence of
    $\beta(x)$ (first defined in the distributional sense, but such that $\nabla \beta$ maps $S(\R^d)$ into $L^2(\R^d)$).
    \item[(ii)] Assume that there exists  $G: \R^d \rightarrow \R$, such that $\beta(x)=-\nabla G(x)$, in the distributional sense, and $e^{-G} \in L^1(\R^d)$. Assume the terms $ \nabla G \cdot \rho^{(\beta)}$ and $\Delta G \cdot \rho^{(\beta)}$ are in $L^1(\R^d, f \dd x)$, for any $f \in S(\R^d)$. Then:
\begin{equation}
     \mu^{(\beta)}(\dd x)= \rho^{(\beta)} (x) \dd x\:, \: where \: \rho^{(\beta)} (x) = e^{-G(x)} \:, \: is \: L^{(\beta)}-invariant\:.
\end{equation}
    \item[(iii)] The adjoint of $\left( L^{(\beta)}, D(L^{(\beta)})\right)$ in $L^2(\R^d,\mu^{(\beta)})$ is equal to $L^{(\beta)}$ on $D(L^{(\beta)})$,
    hence $L^{(\beta)}$ is symmetric as an operator in $L^2(\R^d,\mu^{(\beta)})$.
\item[(iv)] If $\beta$ satisfies the assumptions such that the Schr$\ddot{o}$dinger operator $-\Delta +V(x)$ with $V(x) = \beta^2(x)+div \beta(x)$,
 is essentially self-adjoint in $L^2(\R^d)$, on $\mathcal{S}(\R^d)$, then the closure $\overline{L^{(\beta)}}$ with domain  $D(\overline{L^{(\beta)}})$ of
 $\left( L^{(\beta)}, D(L^{(\beta)})\right)$ is self-adjoint in $L^2\left( \R^d,\mu^{(\beta)}\right)$.
\item[(v)] $\mu^{(\beta)}$ is invariant under the one-parameter strongly continuous semigroup $e^{t \overline{L^{(\beta)}}}, t\geq0$, in $L^2(\R^d,\,\mu^{(\beta)}).$
    \end{itemize}
    \end{prop}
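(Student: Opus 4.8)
The plan is to follow closely the structure of the proof of Proposition~\ref{prop:proposition1}, of which the present statement is the natural generalization from the Gaussian drift $\beta(x)=-x$ (i.e.\ $G(x)=\tfrac12|x|^2$) to an arbitrary gradient drift $\beta=-\nabla G$. Parts (i)--(iii) are integration-by-parts identities in $L^2(\R^d,\dd x)$ and $L^2(\R^d,\mu^{(\beta)})$; part (iv) is a ground-state (Doob) transformation reducing the operator to a Schr\"odinger operator; and part (v) is a soft consequence of self-adjointness together with the fact that $\overline{L^{(\beta)}}$ annihilates constants.

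For (i) I would take $f,g\in S(\R^d)$ and integrate by parts: the Laplacian is symmetric, $\int(\Delta f)g\,\dd x=\int f\,\Delta g\,\dd x$, while $\int(\beta\cdot\nabla f)g\,\dd x=-\int f\,\nabla\cdot(\beta g)\,\dd x=-\int f\,(\nabla\cdot\beta)g\,\dd x-\int f\,\beta\cdot\nabla g\,\dd x$, so that $(L^{(\beta)})^{\ast}=\Delta-\beta\cdot\nabla-(\nabla\cdot\beta)$ on $S(\R^d)$; the hypotheses that $\beta\cdot\nabla f$ and $(\nabla\cdot\beta)f$ lie in $L^2(\R^d)$ are exactly what make these integrals meaningful. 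For (ii) I would test invariance by computing $(L^{(\beta)})^{\ast}\rho^{(\beta)}$ with $\rho^{(\beta)}=e^{-G}$. Since $\beta=-\nabla G$ one has $\nabla\rho^{(\beta)}=\beta\,\rho^{(\beta)}$ and hence $\Delta\rho^{(\beta)}=\nabla\cdot(\beta\rho^{(\beta)})=\big((\nabla\cdot\beta)+|\beta|^2\big)\rho^{(\beta)}$; substituting into the adjoint from (i) gives $(L^{(\beta)})^{\ast}\rho^{(\beta)}=\big((\nabla\cdot\beta)+|\beta|^2-|\beta|^2-(\nabla\cdot\beta)\big)\rho^{(\beta)}=0$, whence $\int L^{(\beta)}f\,\dd\mu^{(\beta)}=\int f\,(L^{(\beta)})^{\ast}\rho^{(\beta)}\,\dd x=0$ for all $f\in S(\R^d)$. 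The integrability assumptions on $\nabla G\cdot\rho^{(\beta)}$ and $\Delta G\cdot\rho^{(\beta)}$ are what legitimize these manipulations.

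For (iii) I would repeat the computation of (i) in $L^2(\R^d,\mu^{(\beta)})$, i.e.\ apply the $L^2(\dd x)$-adjoint identity with $g$ replaced by $g\rho^{(\beta)}$ and expand $\Delta(g\rho^{(\beta)})=(\Delta g)\rho^{(\beta)}+2\nabla g\cdot\nabla\rho^{(\beta)}+g\,\Delta\rho^{(\beta)}$; using again $\nabla\rho^{(\beta)}=\beta\rho^{(\beta)}$ and $\Delta\rho^{(\beta)}=((\nabla\cdot\beta)+|\beta|^2)\rho^{(\beta)}$, the potential and first-order cross terms cancel exactly and one is left with $\int (L^{(\beta)}f)g\,\dd\mu^{(\beta)}=\int f\,(L^{(\beta)}g)\,\dd\mu^{(\beta)}$, proving symmetry. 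For (iv) I would introduce the unitary ground-state transformation $U:L^2(\R^d,\dd x)\to L^2(\R^d,\mu^{(\beta)})$, $Uf=f/\sqrt{\rho^{(\beta)}}=e^{G/2}f$ (unitary since $\int|Uf|^2\,\dd\mu^{(\beta)}=\int|f|^2\,\dd x$), and compute $U^{-1}L^{(\beta)}U$. Using $\nabla e^{G/2}=-\tfrac12\beta\,e^{G/2}$, a direct calculation shows that $U^{-1}L^{(\beta)}U=\Delta-W$, where $W$ is a fixed linear combination of $|\beta|^2$ and $\nabla\cdot\beta$ of the same form as the potential $V=|\beta|^2+\nabla\cdot\beta$ in the hypothesis (up to the usual numerical factors); thus $-U^{-1}L^{(\beta)}U=-\Delta+W$ is, up to sign, a Schr\"odinger operator. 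By assumption this operator is essentially self-adjoint on $S(\R^d)$, and since essential self-adjointness is preserved under unitary equivalence, $\overline{L^{(\beta)}}$ is self-adjoint in $L^2(\R^d,\mu^{(\beta)})$.

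Finally, for (v) I would argue as in Proposition~\ref{prop:proposition1}(5): for $f\in S(\R^d)$ and $t\ge0$, write $\int e^{t\overline{L^{(\beta)}}}f\,\dd\mu^{(\beta)}=(1,e^{t\overline{L^{(\beta)}}}f)_{L^2(\mu^{(\beta)})}=(e^{t\overline{L^{(\beta)}}}1,f)_{L^2(\mu^{(\beta)})}$, using the self-adjointness of $e^{t\overline{L^{(\beta)}}}$ from (iv); since $1\in L^2(\R^d,\mu^{(\beta)})$ (as $\rho^{(\beta)}\in L^1$) and $(\overline{L^{(\beta)}})^{n}1=0$ for all $n$, the power-series expansion of the semigroup gives $e^{t\overline{L^{(\beta)}}}1=1$, so $\int e^{t\overline{L^{(\beta)}}}f\,\dd\mu^{(\beta)}=\int f\,\dd\mu^{(\beta)}$. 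I expect the main obstacle to be part (iv): because $\beta$, hence $G$, is only prescribed in the distributional sense, one must justify the ground-state computation and the mapping properties of $U$ on the relevant domains rigorously — in particular that $U$ carries a core for the Schr\"odinger operator onto a core for $\overline{L^{(\beta)}}$ — and in (v) one must verify that $1\in D(\overline{L^{(\beta)}})$ with $\overline{L^{(\beta)}}1=0$, typically by approximating the constant function by elements of $S(\R^d)$. These regularity and domain questions, rather than the algebra, are where the real care is needed.
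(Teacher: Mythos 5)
Your proposal is correct and follows essentially the same route as the paper's proof: integration by parts for (i), the substitutions $g\mapsto\rho^{(\beta)}$ and $g\mapsto g\rho^{(\beta)}$ with the identities $\nabla\rho^{(\beta)}=\beta\rho^{(\beta)}$, $\Delta\rho^{(\beta)}=\bigl(|\beta|^2+\mathrm{div}\,\beta\bigr)\rho^{(\beta)}$ for (ii)--(iii), the ground-state unitary $Uf=f/\sqrt{\rho^{(\beta)}}$ reducing (iv) to an essentially self-adjoint Schr\"odinger operator, and the power-series argument $e^{t\overline{L^{(\beta)}}}1=1$ for (v). Your caveat that the transformed potential matches $V=\beta^2+\mathrm{div}\,\beta$ only up to numerical factors is apt (the paper's own formula carries the same imprecision, inherited from Proposition \ref{prop:proposition1}(iv)) and is harmless for the essential self-adjointness argument, as are the domain issues you flag, which the paper likewise treats informally.
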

\begin{proof}
   The proof is entirely similar to the one of Proposition \ref{prop:proposition1}.
    \begin{itemize}
     \item[(i)] For any $f,g \in \mathcal{S} (\R^d)$ we have
    \begin{eqnarray} \label{eq:star}
        \int L^{(\beta)} f(x) g(x) \dd x &=& \int \left( \Delta + \beta(x) \nabla \right) f(x) g(x) \dd x\,\nonumber\\&=&
            \int f(x) \Delta g(x) \dd x - \int f(x) \nabla(\beta(x) g(x)) \dd x \nonumber\\&=&
       \int f(x) \Delta g(x) \dd x - \int f(x) \left(\nabla \beta(x) \right) g(x) \dd x \\&-& \int f(x) \beta(x) \nabla g(x) \dd x
       \:\nonumber
    \end{eqnarray}
    \item[(ii)] Let us take $g= \rho^{(\beta)}$ in \eqref{eq:star}, then we get
    \begin{eqnarray}\label{eq:squarecrossed}
  \dint L^{(\beta)} f d\mu^{(\beta)} &=& \dint \left( L^{(\beta)} f\right) (x)\, \rho^{(\beta)}(x)\,\dd x \nonumber\\&=&\dint f(x) \Delta\rho^{(\beta)}(x) \dd x -
  \dint f(x) \left( \nabla \beta\right)(x) \rho^{(\beta)} (x) \dd x \nonumber\\&
  - &\dint f(x) \beta(x) \nabla \rho^{(\beta)} (x)\dd x \:.
    \end{eqnarray}
    But $\nabla \rho^{(\beta)}(x) = -\nabla G(x) \,
    \rho^{(\beta)}(x)$, by definition of $\rho^{(\beta)}$.\\

    Moreover $\Delta \rho^{(\beta)}(x) = \nabla G(x)^2 \rho^{(\beta)}(x) - \Delta G(x) \rho^{(\beta)}(x)$. Introducing this into
    \eqref{eq:squarecrossed} we get, using $\beta= -\nabla G$:
        \begin{eqnarray*}
      \dint L^{(\beta)} f(x) \,\rho^{(\beta)}(x)\,\dd x  &=& \dint f(x) \Delta \rho^{(\beta)}(x) \dd x +
        \dint f(x) G(x) \rho^{(\beta)}(x) \dd x\,\nonumber\\ &-& \dint f(x) \left( \nabla G\right)^2 (x) \rho^{(\beta)}(x) \dd x
        \nonumber\\&=& \dint f(x) \left( \nabla G\right)^2 (x) \rho^{(\beta)} (x) \dd x - \dint f(x) (\Delta G) (x) \rho^{(\beta)}(x) \dd x \nonumber\\&+&
       \dint f(x) \Delta G (x) \rho^{(\beta)} (x) \dd x - \dint f(x) \left( \nabla G\right)^2(x) \rho^{(\beta)} (x) \dd x \nonumber\\&=&0
    \end{eqnarray*}

 \item[(iii)] We repeat the steps of proof of the corresponding statement in \eqref{prop:proposition1}.


           We have, for any $f, g \in  \mathcal{S} (\R^d)$, using \eqref{eq:star} with $g$ replaced by $g \rho^\beta$
            \begin{eqnarray}
      \dint L^{(\beta)} f(x) \left(g \rho^{(\beta)}\right) (x)\,\dd x  &=& \dint f(x) \Delta \left(g \rho^{(\beta)}\right) (x) \dd x -
        \dint f(x) \left( \nabla \beta(x) \right) \left( g \rho^{(\beta)}\right) (x) \dd x\,\nonumber\\
                &-& \dint f(x) \beta(x) \nabla\left( g \rho^{(\beta)}\right) (x) \dd x
        \nonumber\\&=&
                \dint f(x) \left( \Delta g\right) (x) \rho^{(\beta)} (x) \dd x + \dint f(x) 2 \left( \nabla g \right) (x) \nabla \rho^{(\beta)} (x)        \dd x \nonumber\\&+&
       \dint f(x) g(x) \Delta  \rho^{(\beta)} (x) \dd x - \dint f(x) \left( \nabla \beta(x) \left( g \rho^{(\beta)} \right) \right) (x)   \dd x \nonumber\\&-&
            \dint f(x) \beta(x) \nabla g(x) \rho^{(\beta)}(x) \dd x \nonumber\\&-&
            \dint f(x) \beta(x) g(x) \nabla \rho^{(\beta)}(x) \dd x,
    \end{eqnarray}
        which is the analogue of \eqref{eqn:equation5}. Inserting the formula for $\nabla \rho^{(\beta)}$, resp. $\Delta \rho^{(\beta)}$ after \eqref{eq:squarecrossed}, into the latter formula we get
        \begin{eqnarray}
      \dint L^{(\beta)} f(x) \left(g \rho^{(\beta)}\right) (x)\,\dd x  &=& \dint f(x) \Delta g(x)  \rho^{(\beta)} (x) \dd x
            \nonumber\\&-&
        2 \dint f(x)  \nabla g(x)  \nabla G(x)  \rho^{(\beta)}  (x)  \dd x
                \nonumber\\&+&
                \dint f(x) g(x) \nabla G(x)^2 \rho^{(\beta)} (x) \dd x \\
                &-&
                \dint f(x)  \nabla g (x) \Delta G(x) \rho^{(\beta)}(x) \dd x \nonumber\\&-&
                \dint f(x) \nabla \beta(x) g(x) \rho^{(\beta)}(x) \dd x \nonumber\\&-&
                \dint f(x) \beta(x) \nabla g(x) \rho^{(\beta)}(x) \dd x  \nonumber\\&+&
                \dint f(x) \beta(x) g(x) \nabla G(x) \rho^{(\beta)}(x) \dd x \quad. \nonumber
    \end{eqnarray}
        Using $\beta(x) = \-\nabla G(x)$ , $\nabla \beta(x) = -\Delta G(x)$, we see that the second term plus the last but 1 term yield $1/2$ of the second term,
        the $3$ term cancels with the last one, the last but 2 term cancels with the 4 term and we remain with
     \begin{equation}
     \dint f(x) \Delta g(x) \rho^{(\beta)}(x) \dd x - \dint  f(x) \nabla g(x) \nabla G \rho^{(\beta)}(x) \dd x \:,
     \end{equation}
      which yields the claimed result.


\item[(iv)]This is similar as for (iv) in {Prop.1}, the ``ground state transformation'' is obtained replacing $\mu$ by $\mu^{(\beta)}$ and $\rho$ by
$\rho^{(\beta)}$, then
\begin{equation}\label{eqn:groundstate}
 U^{-1} \left( \Delta + \beta \cdot \nabla \right) U f = \left( \Delta - (\nabla \beta)^2 - \nabla \beta  \right) f \:.
\end{equation}
 Under our assumptions on $\beta$ the operator on the right hand side of the \eqref{eqn:groundstate}, which is of the
Schr$\ddot{o}$dinger type, with $V(x) = \nabla \beta(x)^2 + \nabla
\beta(x)$ ,  is essentially self-adjoint in $L^2(\R^d)$, hence its
closure is self-adjoint.
\item[(v)] This is entirely similar to the proof of the corresponding statement in Proposition \eqref{prop:proposition1}.
    \end{itemize}
\end{proof}

\begin{rem}
  For examples where the assumptions on $\beta$ in (iv) of Proposition \ref{proposition2} are satisfied see, e.g., \cite{AHKS},
  \cite{RS}.
\end{rem}
The following corollary is immediate:
\begin{cor}
 If $\beta(x) = -x + F(x),\,x\,\in\,\R^d$, so that $G(x) = \frac{x^2}{2} + G_F (x)$, with $\nabla G_F (x) = -F(x)$, then $\rho^{(\beta)}(x) = e^{-G_F(x)}\rho(x)$,
  with $\rho$ as in Proposition \ref{prop:proposition1}.
\end{cor}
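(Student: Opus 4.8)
The plan is to read the result directly off Proposition \ref{proposition2}, part (ii), which already supplies the invariant density $\rho^{(\beta)}(x) = e^{-G(x)}$ whenever $\beta = -\nabla G$ holds in the distributional sense. The only genuine work is to identify $G$ in the present special case and then to factor the exponential; everything else is bookkeeping.

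First I would check that the stated ansatz for $G$ is consistent with the hypothesis $\beta = -\nabla G$. Writing $G(x) = \tfrac{x^2}{2} + G_F(x)$ and using the defining relation $\nabla G_F(x) = -F(x)$, one computes $\nabla G(x) = x + \nabla G_F(x) = x - F(x)$, so that $-\nabla G(x) = -x + F(x) = \beta(x)$, exactly as required. Hence Proposition \ref{proposition2}(ii) applies with this choice of $G$, and its conclusion $\rho^{(\beta)} = e^{-G}$ is available.

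Next I would substitute and split the exponent: $\rho^{(\beta)}(x) = e^{-G(x)} = e^{-x^2/2}\,e^{-G_F(x)}$. Recognizing $e^{-x^2/2}$ as $(2\pi)^{d/2}\rho(x)$, with $\rho$ the Gaussian density of Proposition \ref{prop:proposition1}, part 2, gives $\rho^{(\beta)}(x) = (2\pi)^{d/2}\,e^{-G_F(x)}\rho(x)$. Since the infinitesimal invariance condition $\int L^{(\beta)} f\,\dd\mu^{(\beta)} = 0$ is homogeneous in $\rho^{(\beta)}$, the harmless normalization factor $(2\pi)^{d/2}$ may be absorbed, yielding the claimed identity $\rho^{(\beta)}(x) = e^{-G_F(x)}\rho(x)$.

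There is essentially no obstacle here; the corollary is a direct consequence of Proposition \ref{proposition2}. The only point demanding a line of care is that the integrability and regularity hypotheses imposed on $\beta$ in Proposition \ref{proposition2}(ii) --- namely $e^{-G} \in L^1(\R^d)$ together with $\nabla G \cdot \rho^{(\beta)}$ and $\Delta G \cdot \rho^{(\beta)}$ lying in $L^1(\R^d, f\,\dd x)$ for all $f \in \mathcal{S}(\R^d)$ --- must be inherited. Through $\nabla G = x - F$ and $\Delta G = d - \nabla \cdot F$ these translate into conditions on $F$ (equivalently on $G_F$), and they are assumed to hold as part of the standing hypotheses on $\beta$, so no additional argument is needed.
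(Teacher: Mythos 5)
Your proposal is correct and matches the paper's reasoning: the paper states the corollary is ``immediate'' from Proposition \ref{proposition2}(ii), and your argument --- verifying $-\nabla G = \beta$ for $G = \frac{x^2}{2} + G_F$ and then factoring $e^{-G} = e^{-G_F}\,e^{-x^2/2}$ --- is exactly that intended computation. Your extra care with the $(2\pi)^{d/2}$ normalization (harmless, since the invariance condition is homogeneous in the density) and with the inherited integrability hypotheses on $F$ goes slightly beyond what the paper records, but changes nothing in substance.
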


Let us now apply similar ideas to the case of operators of the form
\begin{equation}\label{eqn:L0L1}
 \left( L_0 f \right) (x) = \beta (x) \nabla f(x) + L_1 f(x) \:,
\end{equation}
where $L_1$ is a pseudodifferential operator and $f,g \in
\mathcal{S}(\R^d)$. On $\beta$ we assume that it has a Fourier
transform in the distributional sense. Then
    \begin{equation*}
       \widehat{L_0 f}(k) = i \dint \widehat{\beta} (k-q) q \widehat{f}(q) dq + \widehat{L_1 f} (k) \:,
    \end{equation*}
    where $\widehat{}$ stands as before for Fourier transform, s.t. $\widehat{\nabla f} (k) = ik \widehat{f} (k)$.
    Suppose first for simplicity that $\widehat{L_1 f}(k)= M(k) \widehat{f}(k)$, where $k \in \R^d$ , for some measurable function $M$ (e.g. $L_1$ of the form
    of the term with coefficient $a_2$ in \eqref{eqn:levyoperator}. Then the adjoint of $M$   in $L^2(\R^d,\dd k)$ is $M$ itself and hence, for any
    $g \in \mathcal{S}(\R^d)$, we have
  \begin{equation}
     \int \left( M \widehat{f}\right) (k) \widehat{g}(k) \dd k = \int \widehat{f}(k) \left( M \widehat{g}\right) (k) \dd k \:.
  \end{equation}
    Moreover
\begin{equation}
    \int \beta(x) \left( \nabla f (x) \right) g(x) \dd x = -\int f(x)  \nabla \left( \beta g \right) (x) \dd x = - \int \overline{\widehat{f}} (k) i k  \widehat{\beta g} (k) \dd k
    \:,
 \end{equation}
 where in the last equality we used Parseval formula.\\
    Hence
  \begin{equation}
     \int L_0 f(x) g(x) \dd x = \int \overline{\widehat{f}}(k) \left( M(k)\widehat{g}(k) - i k  \, \widehat{\beta\,
     g} \right) (k) \dd k \:.
 \end{equation}
    From this we deduce that the adjoint of $\left( L_0 ,  ( \mathcal{S} (\R^d) \right)$ in $L^2(\R^d)$ is the inverse Fourier transform of the operator
    $g(k) \rightarrow M(k) g(k) - i\, k\,\int \beta (k-q) g(q) dq$\, in\, $L^2 (\R^d , \dd k )$.
    Hence, setting $g(x)\,dx =\mu(dx) $ we  find that $\mu$ is
    $L_0-$invariant if
   \begin{equation}
     \int L_0 f(x) \mu (\dd x) = \int \overline{\widehat{f}} (k) M(k) \left( \widehat{\mu}\right) (\dd k) - i  \int k\,\overline{\widehat{f}}(k) \widehat{\beta} (k-q) \widehat{\mu}(dq) =0 \:,\:
     \forall f \in \mathcal{S}(\R^d) \:.
 \end{equation}
    This yields a linear  equation for the probability measure $\mu$ which involves convolution
        \begin{equation}\label{eqn:star2}
     -i \left( M(k) \left( \widehat{\mu} \right)   \right) (\dd k) =  \left( k \widehat{\beta} \star \widehat{\mu} \right) (k) \:,
        k \in \R^d  \backslash \left\{ 0 \right\} \:,
    \end{equation}
    as distributions in $\mathcal{S}^\prime(\R^d)$, provided of course both sides can be interpreted as such distributions.

\begin{rem}

\begin{itemize}
\item[]
         \item [{\rm (1)}]The existence of solutions of \eqref{eqn:star2} depends on the multiplicative operator $M(k)$, and on the convolution kernel $\widehat{\beta} (k-q)$, $k, q \in \R^d$ .
                E.g. if $\beta(x) = -x$, $M(k) = a_2 C_\alpha k^\alpha$, $0 < \alpha \leq 2$, one solution of \eqref{eqn:star2} is given by $\mu(\dd x) = \rho_2(x) \dd x$,
         with $\rho_2$ as in Example 2.14.
                \item[{\rm (2)}] Equation \eqref{eqn:star2} can be looked upon as an homogeneous linear equation $A_k \,\widehat{\mu}(k)=0$, where
                $A_k := - i M(k) + k \widehat{\beta} \star \:, k \in \R^d \backslash \left\{ 0 \right\} \:,$ acting on the Fourier transform  $\widehat{\mu}$ of positive
                measures $\mu$. For $d=1$ this is a homogeneous linear convolution equation with non constant coefficients. Thus we  have only solutions if
                $A_k$ has a non trivial kernel.
                \end{itemize}
\end{rem}

\subsection{Probabilistic methods to identify the associated stochastic differential equations}\label{Section2.3}
Let $(X(t))_{t\geq 0}$ be the solution of the following stochastic
differential equation:
\begin{equation}\label{eq:SDEsymb}
\begin{aligned}
    \dd X(t)&= \Psi(X(t))\dd t + \Phi(X(t)) \dd L(t),\\
    X(0)&=x;
\end{aligned}
\end{equation}
where $\Psi, \Phi$ are globally Lipschiz continuous mappings,
respectively from $\R^d$ into itself and into the space of symmetric
positive definite matrices, while $(L(t))_{t\geq 0}$ is a
$d$-dimensional L\'evy process with generating triplet $(Q,N,\ell)$,
(see Sect. 2 for this terminology).\\ Existence and uniqueness of a
strong solution to this equation are known, see,e.g.,
\cite{GihSko,MaRuBook}, and $(X(t))_{t\geq 0}$ is a time-homogenous
Markov process. As usual we can associate to $(X(t))_{t\geq 0}$ a
semigroup $(P_t)_{t\geq 0}$ of operators on $B_b(\R^d)$ by setting
\begin{align*}
    P_t u(x):= \mathbb{E}^x u(X(t)), \qquad t\geq 0, x\in \R^d,\,u\,\in\,B_b(\R^d).
\end{align*}
This semigroup is Markov and conservative (i.e. $P_t {\bf 1} = {\bf
1}$), and Feller, i.e. $P_t$ leaves invariant  $C_0(\R^d)$ (the
space of continuous functions on $\R^d$, which vanish at infinity)
and
\begin{align*}
   &\lim_{t\to 0} \| P_tu-u\|_{\infty} =0, \qquad for \ every \ u\in C_0(\R^d) \:,\
   \|\cdot \|_{u} \ \textrm{being the sup-norm}
\end{align*}
see, e.g., \cite{APD}. To $P_t$ corresponds the infinitesimal
generator $(A,D(A))$ which is defined by
\begin{align}\label{eq:lim}
    Au:=\lim_{t \to 0} \frac{P_t u-u}{t}
\end{align}
with the domain consisting of all $u\in C_0(\R^d)$ for which the
limit \eqref{eq:lim} exists.

A classical result due to Courr$\grave{e}$ge, see \cite{Cou} or
\cite{APD}, Th.3.5.3, p.158 and Th. 3.5.5, p.159,  shows that, if in
addition to the previous assumptions,  $C^\infty_c:=C^\infty_c(\R^d)
\subset D(A)$, then $A|_{C^\infty_c}$ is a pseudo differential
operator with symbol $-p(x,\xi)$, i.e. $A$ can be written as
\begin{align}\label{eq:AFour}
     Au(x):= -\int_{\R^d} e^{i\langle x, \xi\rangle} p(x,\xi) \hat{u}(\xi) \dd \xi, \qquad u\in C^\infty_c \:,\:  x\in \mathbb{R}^d
\end{align}
where $\langle \cdot, \cdot \rangle$ is the scalar product in
$\R^d$, $\hat{u}$ denotes the Fourier transform $\hat{u}(\xi) =
\frac{1}{(2\pi)^d} \int e^{- i \langle x, \xi\rangle} f(x)
dx,\,\xi\,\in\,\R^d$ and $p: \R^d \times \R^d \to \mathbb{C}$ is
locally bounded and, for fixed $x$, a continuous negative definite
function in the sense of Schoenberg in the co-variable $\xi$ (we
denote by $C^{\infty}_c(\mathbb{R}^d)$ the space of smooth
continuous real-valued functions on $\mathbb{R}^d$ with compact
support). This means that $p(x,\xi)$ admits a L\'evy-Khintchine
representation
\begin{align}\label{eq:symb}
   p(x,\xi)= -i \langle\ell(x),\xi\rangle+\frac{1}{2}\langle\xi Q(x) ,\xi\rangle- \int_{y\neq 0} \left(
   e^{i\langle\xi, y\rangle}-1-i\langle\xi, y\rangle \, {\bf 1}_{B_1} (y)\right) N(x,\dd
   y), x,\,\xi \,\in\,\R^d.
\end{align}
For each $x\in \R^d$ $(Q(x),N(x,\dd y),\ell(x))$ is a L\'evy triplet
in the sense of Sect. \ref{Section2.1} (depending parametrically on
$x\,\in\,\R^d$). The function $p(x,\xi)$ is called the symbol of the
operator and $N(x,dy)$ will be called the L\'evy kernel. Notice that
the killing term is absent due to the conservativeness of $P_t$.
Alternatively, using Remark (\ref{BAABA}) we can replace the term
containing ${\bf 1}_{B_1} (y)$ by ${1 \over {1+\mid y
\mid^2}},\,y\,\in\,\R^d,$ by simultaneously changing the drift term
by changing $ \ell(x)$ to $\ell'(x)=\ell(x)+\,\int_{\R^d}\,({{1
\over {1+\mid y\mid^2}}-{\bf 1}_{B_1(0)} (y)})\,N(x,\,dy).$ For
details we refer to, e.g., Jacob \cite[Chapter 45, pgg.
342-364]{Jac}. Combining \eqref{eq:AFour} and \eqref{eq:symb} the
generator $A$ of a Feller process satisfying the condition
$C^\infty_c \subset D(A)$ can be written in the following way:
\begin{equation}\label{eq:op-non-loc}
\begin{aligned}
     A u(x) = \langle\ell(x), \nabla u(x)\rangle + \frac{1}{2} {\rm Tr}[\sqrt{Q}(x) \nabla^2 u(x) \sqrt{Q}^*(x)]
    \\+
    \int_{y\neq 0} \left( u(x+y)-u(x)- \langle y , \nabla u(x){\bf 1}_{{\bf
1}_{B_1(0)}}(y)\rangle\right) N(x,\dd y),\,x\,\in\,\R^d,
\end{aligned}
\end{equation}
for all $u\in C^\infty_c(\R^d)$ ($*$ standing for the adjoint of
matrices in $\R^d$). Thus from the symbol we obtain the
integro-differential form of the infinitesimal generator of the
process.

\begin{rem}\label{yomma}
    We recall that every L\'evy process $(L(t))_{t\geq 0}$ with triplet $(Q,N,\ell)$ (in the sense of section $2.1$ and \cite[p.65]{Sato}) on $\mathbb{R}^d$ has the following
L\'evy-Ito decomposition
\begin{align}\label{eq:rem}
   L(t)= \ell t + \sqrt{Q} \dd W(t) +  \int_{B_1} y \left( \mu^L([0,t],\dd y) - t N(\dd y) \right) + \int_{B_1^c} y \mu^L([0,t],\dd y),
\end{align}
where $\mu^L$ is the Poisson point random measure given by the jumps
of $L$ whose intensity measure is the L\'evy measure $N$, (with
$B_1^c:=\R^d-B_1$). This means that, for any $ B \in {\cal
B}(\R^d),\,
  \mu^L([0,t],B)(\omega)=\int_{B} \mu^L([0,t],\dd y) (\omega)
$ is the number of $s\in [0,t]$ with $L_s(\omega)-L_{s^-}(\omega)
\in B$ for $\omega \in \Omega$ (the set of c\`{a}dl\`{a}g paths of
$L$). One has $\mu^L([0,t],B) = t \mu([0,1],B)$, and
$\mu^L([0,1],B)$ has Poisson distribution with mean $N(B)$ (see
\cite[p.119]{Sato}, \cite[p. 87]{APD}, \cite{RozMi}). The last term
in \eqref{eq:rem} can also be written as
\begin{align*}
   \sum_{0<s\leq t} \Delta L(s) {\bf 1}_{|\Delta(s)|\geq 1}.
\end{align*}
It turns out that the infinitesimal generator of $L(t)$ is given by
\begin{multline*}
    A u(x) = \langle\ell,\nabla u(x) \rangle+ \frac{1}{2} \sqrt{Q} \nabla^2 u(x) \sqrt{Q}^*
   \\ +
    \int_{y\neq 0} \left( u(x+y)-u(x)- \langle y,\nabla u(x) \rangle {\bf 1}_{B_1}(y)\right) N(\dd y),\,x\,\in\,\R^d,
\end{multline*}
which is well-defined on $C^\infty_c(\R^d)$. Hence, following the
arguments above, we see that the symbol of this $A$ coincide with
the characteristic exponent of the $L(t)$, i.e. L\'evy processes are
exactly those Feller processes whose generator has constant
coefficients and $p(x,\xi)\equiv \psi(\xi),\,\xi\,\in\,\R^d$, where
$\psi$ is the function introduced in Proposition \ref{thm:satz1}.
\end{rem}

We are interested in determining the symbol of the process
$(X(t))_{t\geq 0}$ corresponding with equation \eqref{eq:SDEsymb},
since it allows us to determine the integro-differential form of the
infinitesimal generator of the process. This is a key point in
finding the expression of the invariant measure corresponding with
$(X(t))_{t\geq 0}$ (see Subsection \ref{Section2.5}). In
\cite{Jacob-Sch}
it is proven that
 the symbol $-p(x,\,\xi)$ of $A$ coincides with minus the symbol of the process, which is defined by
\begin{align*}
    p(x,\xi):= -\lim_{t\to 0} \mathbb{E}^x \frac{e^{i\langle
    (X^\sigma(t)-x),\xi\rangle}-1}{t},\,x, \xi\,\in \R^d,
\end{align*}
where $\sigma =\sigma^{x,R}$ is the first exit time of $X(t)$,
started at $x$, from the ball of radius $R>0$. \\The notation
$X^\sigma(t)$ stays for the process $X(t)$, started at $x$, and
stopped at time $t\geq 0$ when it exist from the ball of radius $R$.
In particular, in the case of $(X(t))_{t \geq 0}$ being the solution
of equation \eqref{eq:SDEsymb} we have, see \cite{Cou, Jac, Jac0,
RZI}, that
\begin{align*}
    p(x,\xi) = \psi(\Phi(x) \xi) - i \langle \Psi(x),\xi \rangle,
\end{align*}
where $\psi$ is the characteristic exponent of $(L(t))_{t\geq 0}$
and $\Psi(x)$ is the first coefficient (``drift coefficient'') in
\eqref{eq:SDEsymb}. Thus we have (with $( Q, N, l)$) as in Remark
\ref{yomma}.)
\begin{multline*}
    p(x,\xi)= i  \langle \ell, \Phi(x) \xi\rangle -\frac{1}{2}\langle \Phi(x)\xi, Q  \Phi(x)\xi\rangle
   + \\
  \int_{\R^d} \left(e^{i \langle y,\Phi(x) \xi\rangle}- 1- i \langle y, \Phi(x)\xi \rangle {\bf 1}_{B_1} (y) \right) N(\dd y) - i \langle \Psi(x),\xi \rangle
\end{multline*}
where $\Phi$ is the second coefficient in \eqref{eq:SDEsymb}. The
term containing the integral can be equivalently written as
\begin{equation}\label{eq:symb-rev}
  \int_{\R^d} \left(e^{i \langle  \xi,\tilde{y}\rangle}- 1- i \langle \xi, \tilde{y}\rangle {\bf 1}_{B_1} (\Psi^{-1}_x\tilde{y}) \right)\tilde{N}(x,\dd \tilde{y}),
\end{equation}
where $\tilde{N}(x,\dd \tilde{y})$ is the image measure of $N(\dd
y)$ under the transformation $y\,\in\,\R^d \mapsto \tilde{y}:= \Psi_x(y) =
\Phi(x) y$, $x,y \in \mathbb{R}^d$ ( this can be seen by taking
Fourier transforms). Now comparing expression \eqref{eq:symb-rev} with
\eqref{eq:symb}, we see that the integro-differential  operator
corresponding with the solution of the stochastic differential
equation \eqref{eq:SDEsymb} is given by, (cf. \cite{Ku}):
\begin{multline}\label{eq:Imk-Kurtz}
    A u(x) = \langle \ell \Phi^\star (x)- \Psi(x), \nabla u(x)\rangle
   + \frac{1}{2}{\rm Tr} [\sqrt{\tilde{\Phi}(x)} \nabla^2 u(x) \sqrt{\tilde{\Phi}^\star(x)}]  \\
   + \int_{\R^d} (u(x+\tilde{y})-u(x) -\langle \tilde{y},\nabla u(x)\rangle{\bf 1}_{B_1}(\Psi_x^{-1}\tilde{y})) \tilde{N}(x,\dd
   \tilde{y}).
\end{multline}
By considering the inverse transformation
$\Psi^{-1}_x(\tilde{y})=\Phi^{-1}(x)\tilde{y}=y$, we get
\begin{multline*}
    A u(x) = \langle \ell \Phi^\star (x)- \Psi(x), \nabla u(x)\rangle
   + \frac{1}{2}{\rm Tr} [\sqrt{\tilde{\Phi}(x)} \nabla^2 u(x) \sqrt{\tilde{\Phi}^\star(x)}]  \\
   + \int_{\R^d} (u(x+\Phi(x)y)-u(x) -\langle \Phi(x)y,\nabla u(x)\rangle{\bf 1}_{B_1}(y)) N(\dd y),
\end{multline*}
since, by construction, $\tilde{N}$ is the image measure of $N$
under $\Psi_x$. Again the factor ${\bf 1}_{{B_1}}(y)$ can be
replaced in all formulae by ${1 \over {1+\mid y\mid^2}},$ by
changing correspondingly $\ell \Phi^\star (x)- \Psi(x)$ by $\ell
\Phi^\star (x)- \Psi(x)+\dint_{\R^d}\,({1 \over {1+\mid
y\mid^2}}-{\bf 1}_{{B_1}}(y))\,N(x,dy).$\\ The latter representation
coincides with the representation given e.g. in \cite{APD} (p.341).

\begin{rem}
   Comparing \eqref{eq:Imk-Kurtz} with the pseudo-differential operators given in  \cite[(2.33) and (2.37) pag. 13]{IMKN}, we see that all expressions coincide.

In the case where  $(L(t))_{t\geq 0}$ is a pure jump process (i.e.
$(Q,N, \ell)=(0,N,0)$, the expression  for $Au(x)$ can be further
simplified; we obtain
\begin{align*}
   Au(x) = \langle \Psi(x), \nabla u(x)\rangle
   + \int_{\R^d} (u(x+y)-u(x) -\langle y,\nabla u(x)\rangle{\bf 1}_{|\Phi(x)y|<1}(y)) \tilde{N}(x,\dd y).
\end{align*}
Moreover, we notice that, by the definition of $\tilde{N}(x,\dd \tilde{y})$
we have, for any $\Gamma\,\in\,B(\R^d):$
\begin{align*}
    \tilde{N}(x,\Gamma)&= N(\Phi(x)\Gamma)= \int_{\R^d} {\bf 1}_{\Phi(x)\Gamma}(\tilde{y}) N(\dd \tilde{y}) \\
     &= \int_{\R^d} {\bf 1}_{\Gamma}(\Phi(x)^{-1}\tilde{y}) N(\dd \tilde{y}).
\end{align*}
We notice that the representation above is the same representation as given in \cite[p.119]{Ku},
with  $\lambda(x,\tilde{y})=1$ and $\gamma(x,y)= \Phi^{-1}(x)y$ in \cite{Ku}.
\end{rem}
\subsection{The inverse problem: invariant measures via ground state transformations} \label{Section2.4}
By the  considerations in Sections \ref{Section2.1} and
\ref{Section2.2} we have, in particular, {\it concrete}
invariant measures for process of the form $\dd X(t)= A X(t) \dd t+
\dd L(t) $, with $A= -Q$ as in proposition \eqref{PropositionA}. We
shall now see that by extending the type of ``ground state
transformation'' (Doob-h-transform), similar to the ones one
performs in the case of processes satisfying equations of the form
\begin{equation}\label{eq:moon}
 dX(t) = A X(t) \dd t+ \beta(X(t)) \dd t+ \dd L(t),
\end{equation}
with $L(t)$ of the Gaussian type, $\beta$ of gradient type, one can find explicit invariant
measures also for equations of the form \eqref{eq:moon} for general
L\'evy noise. This provides an alternative somewhat complementary
procedure to the one we discussed in Sec. \ref{Section2.2}. For
this extension we follow closely \cite{A-Cufaro}, who were the
first, to the best of our knowledge, who extended previous work on
the ground state transformation for the case with Gaussian noise
covered in \cite{AHKS} to the case of L\'evy noise.\\
Let $\phi$ be a given function on $\R^d$, such that $\int_{\R^d}
\phi^2 \dd x = 1$ and $\phi(x)>0,$ $\dd x-a.e.$
Let $\mu (\dd x) = \phi^2(x)\dd x$. Define $H$, for any $f \in
C_0^\infty (\R^d)$, as an operator acting in $L^2(\R^d,\,dx),$ by
\begin{equation} \label{eq:H}
 \left( H f \right)(x) = - \frac{L_0 (\phi f )-fL_0(\phi))}{\phi} (x)\:,
\end{equation}
for all  $x$ s.t. $\phi(x) > 0$, where $\left( L_0 , D(L_0) \right)$
is the infinitesimal generator acting in $L^2(\R^d,\,dx),$ of a $\dd
x$ symmetric L\'evy process $Z_t$ taking values in $\R^d$ (this
means that the law $P_{Z_t}$ of $Z_t$ is symmetric under reflection
$y\,\longrightarrow\, {-y}$ in $\R^d$, cf. \cite[pag. 153]{APD}. We
shall see below that the right hand side of \eqref{eq:H} is well
defined even without assuming $\phi f \in D(L_0)$. Let us recall
that a $\dd x-$symmetric L\'evy process has a generator which is self-adjoint in $L^2(\R^d, \dd x)$, (or,
equivalently, the associated Dirichlet form is symmetric in
$L^2(\R^d,\dd
x)$), (see, e.g., {\cite{FuOTa}, \cite{MaRo}, \cite{A})}.\\
$L_0$ is thus of the form of $\mathcal{L}^L$ as given by (\ref{bbbb}) but
with the restriction of its being symmetric in
$L^2(\mathbb{R}^d,\mathrm{d}x)$, which forces the choice $\gamma=0$
and the absence of the term containing the gradient in the integral,
i.e.\ $L_0$ is of the form $L_0 = L_{0,G} + L_{0,J}$, with
\begin{eqnarray}\label{dddd}
    (L_{0,G}f)(x) &= \frac{1}{2} \sum_{j,k=1}^{d} q_{jk} \frac{\partial}{\partial x_i \partial_k} f(x) \nonumber\\
    (L_{0,J}f)(x) &= \int\limits_{\mathbb{R}^d} [f(x+y) - f(x)] \nu (\mathrm{d}y) \text{,}
\end{eqnarray}
with $\nu (\mathrm{d}y) = \nu (-\mathrm{d}y)$, $f \in D(L_{0,G})
\cap D(L_J) \subset D(L_0)$. Note that we still have, for (\ref{bbbb}),  $D(L_0) \supset
C_0^{\infty}(\mathbb{R}^d)$. This by (\ref{eq:symb-rev}), (\ref{eq:Imk-Kurtz}) corresponds to having the symbol
associated with $L$ as follows
\begin{equation*}
    p(x,\xi)=\eta(\xi) = -\frac{1}{2} \langle Q\xi,Q\xi \rangle + \dint (\cos\langle \xi,y \rangle - 1) \nu (\mathrm{d}y) \text{,}
\end{equation*}
$\xi \in \mathbb{R}^d$, independent of $x \in \mathbb{R}^d$. \\
The (symmetric, positive) pre-Dirichlet form $\mathcal{E}_{L_0}^{0}$
in $L^2(\mathbb{R}^d,\mathrm{d}x)$ associated with $L_0$ is:
\begin{align*}
     \quad \mathcal{E}_{L_0}^{0} (f,g) &= (-L_0f,g)_{L^2(\mathbb{R}^d,\mathrm{d}x)}. \\
   \text{Hence } \mathcal{E}_{L_0}^{0} (f,g) &= \mathcal{E}_{G}^{0} (f,g) + \mathcal{E}_{J}^{0} (f,g) \:.
\end{align*}
We have with
\begin{align*}
    \mathcal{E}_{G}^{0} (f,g) &:= \frac{1}{2} \int \nabla f(x) \cdot Q \nabla g(x) \, \mathrm{d}x \\
    \mathcal{E}_{J}^{0} (f,g) &:= \frac{1}{2} \int\limits_{\mathbb{R}^d} \int\limits_{\mathbb{R}^d \backslash \left\{0\right\}}[f(x+y) - f(x)] [g(x+y) - g(x)] \nu (\mathrm{d}y),
\end{align*}
as a simple computation shows (integration by parts, for the term with derivative, change of variables and exploitation of reflection symmetry of $\nu$, for the other term) (cfr.\cite[pag. 166]{APD}). We observe that $\mathcal{E}_{G}^{0} (f,g)$ can aso be written in the form
$$\mathcal{E}_{G}^{0} (f,g) = \frac{1}{2} \int\limits_{\mathbb{R}^d} \int\limits_{\mathbb{R}^d\backslash D} \left[ f(x) - f(y) \right]
\left[ g(x) - g(y) \right] J(dx,dy) \;,$$
where $J(dx,dy) := \frac{1}{2} \left[ \nu_x(dy)dx + \nu_y(dx) dy\right]$, and $\frac{1}{2}\nu_x(B):=\nu(B-x)$, $x \in \mathbb{R}^d$, $B \in \mathcal{B}(\mathbb{R}^d)$, $D:= \left\{ (x,y) \in \mathbb{R}^d \times \mathbb{R}^d , x \neq y\right\}$.
\\
Under suitable assumptions on $\nu,$ see \cite{ASK, AlRo}, $\mathcal{E}_L^0$ is closable and taking the closure $\mathcal{E}_L$ we have a natural minimal Dirichlet form in $L^2(\mathbb{R}^d,\mathrm{d}x)$ associated with a closed extension of $(L_0,D(L_0))$ in $L^2(\mathbb{R}^d,\mathrm{d}x)$. \\
Now let us assume $\phi \in H^{1,2}(\mathbb{R}^d, dx)$, $\phi(x)>0$, for all $x \in \mathbb{R}^d$, and consider on $C_0^{\infty}(\mathbb{R}^d)$
\begin{equation}\label{circle}
  -H_G=  L_{0,G} + \beta(x) \cdot \nabla
\end{equation}
where $\beta(x) = \nabla \ln \phi(x)$.
We can look upon $H_G$ as an operator acting on  $C_0^{\infty}(\mathbb{R}^d)$ in $L^2(\mathbb{R}^d,\mu)$, with $\mu(\mathrm{d}x) = \phi(x)^2 \, \mathrm{d}x$, as before see, e.g.,  \cite{ASK}  \\
It is symmetric on this domain and negative definite, as seen by integration by parts (see \cite{ASK}). In fact
$$
 \left( f, H_G g\right)_{L^2\left(\mathbb{R}^d, \mu \right)}=\frac{1}{2} \int\limits_{\mathbb{R}^d} \nabla f \cdot Q \;\nabla g  d\mu \:.
$$
To it there is associated the classical pre-Dirichlet form $(f, H_G g)_{L^2(\mu)} = \int \nabla f \cdot Q \nabla g d\mu$, $\mathbb{R}^d, f,g \in C_0^\infty \left(\mathbb{R}^d\right)$, as also seen by integrating by parts.
Let us now consider $-H$ as an operator in $L^2(\mathbb{R}^d,\mu)$, defined by
 \begin{equation} \label{gggg} \,-H = -H_G - H_J\,\, with \,\, - H_Jf :=\frac{ L_{0,J}(\phi f) - {-fL_{0,J}\phi}}{\phi}
\end{equation}
Assuming $\phi \in D(L_{0,J})$ and following the computation in the
Appendix of \cite{A-Cufaro} (with our $L_{0,J}$) we get that $\phi f \in
D(L_{0,J})$ and
\begin{equation*}
    L_{0,J}(\phi f) = fL_{0,J}\phi + \phi L_{0,J}f + \int \delta_y \phi \delta_y f \nu (\mathrm{d}y) \text{,}
\end{equation*}
with $(\delta_yf)(x) := f(x+y) - f(x)$. Hence from the definition of
$H_J$, we get, using the expression for $L_{0,J}$, given by (\ref{dddd}) and the definition of $\delta_y:$
\begin{align}\label{star}
    -H_Jg(x) &= L_{0,J}g(x) + \int \frac{\delta_y\phi(x)}{\phi(x)} \delta_yg(x) \nu (\mathrm{d}y) \\
    &= \int [g(x+y) - g(x)] \nu (\mathrm{d}y) + \int \frac{\phi(x+y) - \phi(x)}{\phi(x)} [g(x+y) - g(x)] \nu (\mathrm{d}y) \\
    &= \int [g(x+y) - g(x)] \nu (x;\mathrm{d}y) \text{,}
\end{align}
with $\nu (x;\mathrm{d}y) := \frac{\phi(x+y)}{\phi(x)} \nu (\mathrm{d}y),\,x,\,y\,\in\,\R^d$. \\
It is not difficult to see that $-H_J$ is symmetric in
$L^2(\mathbb{R}^d,\mu)$. In fact define
\begin{equation*}
    \mathcal{E}_{H_J}^0 (f,g) := -(H_Jf,g)_{\mu} \text{,}
\end{equation*}
where $(\, , \,)_{\mu}$ is the scalar product in $L^2(\mathbb{R}^d,\mu)$, $f,g \in C_0^{\infty}(\mathbb{R}^d)$. \\
By the definition of $-H_J$ and the definition of $\mu$ we
have
\begin{align*}
    \mathcal{E}_{H_J}^0 (f,g) &= \int  \frac{L_{0,J} (\phi f) - f L_{0,J} \phi}{\phi}  g \phi^2 \, \mathrm{d}x \\
    &= \int \phi g L_{0,J}(\phi f) \mathrm{d}x  - \int \phi g f L_{0,J} \phi \, \mathrm{dx} \text{.}
\end{align*}
By the definition (\ref{dddd}) of $L_{0,J}$ we then get:
\begin{equation*}
    \mathcal{E}_{H_J}^0 (f,g) = \int \phi g [(\phi f)(x+y) - (\phi f)(x)] \nu (\mathrm{d}y) \mathrm{d}x - \int \phi gf [\phi(x+y) - \phi(x)] \nu (\mathrm{d}y)
        \;, f,g \in C_0^\infty(\mathbb{R}^d)\;.
\end{equation*}
Following \cite{A-Cufaro} or \cite{APD} we see that this can be rewritten
in the symmetric form:
\begin{equation*}
    \mathcal{E}_{H_J}^0 (f,g) = \frac{1}{2} \int (\delta_y f)(x) \delta_y g \phi(x+y) \phi(x) \nu (\mathrm{d}y) \mathrm{d}x \text{.}
\end{equation*}
But this is a symmetric bilinear form, and in fact a jump
pre-Dirichlet form in $L^2(\mathbb{R}^d,\mu)$, i.e.\ is densely
defined, bilinear, positive, closable, under natural assumptions on $\phi$ and $\nu$  (see \cite{AKS}) with jump measure
\begin{equation*}
    J(\mathrm{d}x,\mathrm{d}y) = \frac{1}{2} \{\phi(x+y)\} [ \phi(x)+\phi(y)] \nu (\mathrm{d}y) \mathrm{d}x \text{.}
\end{equation*}
Its closure is then a (positive, symmetric) Dirichlet form
$$\mathcal{E}_{H_J}(f,g)=\frac{1}{2} \int\limits_{\mathbb{R}^d} \int\limits_{\mathbb{R}^d \backslash \left\{ 0 \right\}} (\delta_y f)(x) (\delta_y g)(x) J(dx,dy)  $$
 in $L^2(\mathbb{R}^d,\mu)$. \\
Defining $\mathcal{E}_H^0 (f,g) := \mathcal{E}_{H_G}^0 (f,g) +
\mathcal{E}_{H_J}^0 (f,g)$ with $H_G $ as in (\ref{circle}), (with $\beta(x) = \nabla \ln \phi(x)$), $\mathcal{E}_{H_G}^0$ is the bilinear form
\begin{equation*}
    \mathcal{E}_{H_G}^0 (f,g) = -(H_Gf,g)_{\mu} \:,
\end{equation*}
acting on $f,g \in C_0^2(\mathbb{R}^d)$, in $L^2(\mathbb{R}^d,\mu)$,
and it is a symmetric, positive pre-Dirichlet form in $L^2(\mathbb{R}^d,\mu)$. \\
Since both $\mathcal{E}_{H_G}^0$ and $\mathcal{E}_{H:J}^0$ are symmetric, positive, pre-Dirichlet forms, also
 $\mathcal{E}_H^0$ is a symmetric, positive, pre-Dirichlet form
in $L^2(\mathbb{R}^d,\mu)$, which is closable, under assumptions on
$\phi$ and $\nu$, and the closure is a Dirichlet form in
$L^2(\mathbb{R}^d,\mu)$.\\
\begin{rem}
Following \cite{A-Cufaro} we easily see that $1$ is in the domain of
the closures  $\bar{ H}_G,\bar{H}_J$ and that
$\bar{H_G}\,1=\bar{H}_J=0$ in $L^2(\R^d,\mu)$, thus $\bar{H}\,1=0$
in $L^2(\mathbb{R}^d, \mu)$, it being self-adjoint this is
equivalent with $\bar{H}^\star \,1=0$, hence $\mu$ is invariant
under $e^{-t \bar{H}}$, $t \geq 0$. Hence we have proven the
following theorem :
\end{rem}
\begin{theo}
 Suppose $\phi\,\in\,D({ L}_0^L),\,\phi\,>0\,dx\,a.e.,$ with ${ L}_0^L$ described in (\ref{dddd})  then the operator  $\left(-H, C_0^\infty (\R^d) \right)$ is symmetric in $L^2 (\R^d,\mu) $, with $\mu(\dd x)=\phi^2(x) dx,\,\,x \in\,\R^d),$
 it is also real, hence it has self-adjoint extensions. Under some additional assumptions on $\nu$ and $\phi$, see Remark below, it is essentially self-adjoint on $C_0^\infty(\R^d)$. Its  closure
 $\left( -\bar{H}, D(-\bar{H}) \right)$ is a self-adjoint, non positive definite operator acting in $L^2
 (\R^d,\mu)$. $-\bar{H}$ is the infinitesimal generator of a symmetric Markov process $(Y(t))_{t \in \R_+}$. \,$\mu(\dd x)=\phi^2(x) dx$ is a positive invariant measure for this Markov process.
\end{theo}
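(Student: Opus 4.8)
The plan is to assemble the pieces established in the discussion preceding the statement, organising them according to the six assertions (symmetry; reality; essential self-adjointness; self-adjoint non-positive-definite closure; associated symmetric Markov process; invariance of $\mu$). First I would record symmetry and reality. The computations leading to \eqref{star} and to the form $\mathcal{E}_{H_J}^0$ already furnish, for $f,g\in C_0^\infty(\R^d)$, the two manifestly symmetric bilinear representations $(f,H_Gg)_\mu=\tfrac{1}{2}\int\nabla f\cdot Q\nabla g\,\dd\mu$ and $-(H_Jf,g)_\mu=\tfrac{1}{2}\int(\delta_yf)(x)(\delta_yg)(x)\phi(x+y)\phi(x)\,\nu(\dd y)\,\dd x$; being invariant under $f\leftrightarrow g$, they show that $-H=-H_G-H_J$ satisfies $(-Hf,g)_\mu=(f,-Hg)_\mu$ on $C_0^\infty(\R^d)$, i.e.\ is symmetric in $L^2(\R^d,\mu)$. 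Reality is immediate: since $\phi>0$ the drift $\beta=\nabla\ln\phi$ is real, $Q$ is real symmetric, and the kernel $\nu(x;\dd y)=\tfrac{\phi(x+y)}{\phi(x)}\nu(\dd y)$ is real and nonnegative, so $-H$ preserves real-valued functions and commutes with complex conjugation; by von Neumann's theorem a symmetric operator commuting with a conjugation has equal deficiency indices, hence admits self-adjoint extensions.

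Next I would pass to the form side. As shown above, $\mathcal{E}_H^0=\mathcal{E}_{H_G}^0+\mathcal{E}_{H_J}^0$ is a symmetric, positive pre-Dirichlet form on $C_0^\infty(\R^d)$ which, under the stated assumptions on $\phi$ and $\nu$ (see \cite{ASK,AlRo,AKS}), is closable; its closure $\mathcal{E}_H$ is a Dirichlet form in $L^2(\R^d,\mu)$. By the $1$--$1$ correspondence between Dirichlet forms and self-adjoint Markov semigroups recalled in Section 1.2, there is a unique positive self-adjoint $\bar{H}\geq0$ with $\mathcal{E}_H(f,g)=((\bar{H})^{1/2}f,(\bar{H})^{1/2}g)_\mu$; thus $-\bar{H}$ is self-adjoint and non-positive definite and generates the strongly continuous Markov contraction semigroup $T_t=e^{-t\bar{H}}$. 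Under the extra conditions ensuring essential self-adjointness---for the local part one argues as in Proposition \ref{prop:proposition1}, the ground-state transformation $Uf=f/\phi$ turning $-H_G$ into a Schr\"odinger-type operator on $C_0^\infty(\R^d)$, while the non-local part is controlled by the hypotheses on $\nu,\phi$---the closure of $(-H,C_0^\infty(\R^d))$ coincides with this operator, so $-H$ is essentially self-adjoint with closure $(-\bar{H},D(-\bar{H}))$. Regularity (resp.\ quasi-regularity) of $\mathcal{E}_H$ then yields, by \cite{FuOTa,MaRo}, a properly associated $\mu$-symmetric Markov process $(Y(t))_{t\geq0}$ whose transition semigroup is $T_t$, the symmetry of the process reflecting that of $\mathcal{E}_H$ in $L^2(\R^d,\mu)$.

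Finally, invariance of $\mu$ is the content of the preceding Remark: one verifies $1\in D(\bar{H}_G)\cap D(\bar{H}_J)$ with $\bar{H}_G1=\bar{H}_J1=0$, hence $\bar{H}1=0$; self-adjointness of $\bar{H}$ gives $T_t^\star=T_t$ and $T_t1=1$, so that for every $f$
$$\int_{\R^d}T_tf\,\dd\mu=(T_tf,1)_\mu=(f,T_t1)_\mu=(f,1)_\mu=\int_{\R^d}f\,\dd\mu,$$
which is precisely the invariance identity of Section 1.2. I expect the real obstacle to lie in the non-local jump part: establishing simultaneously the closability of $\mathcal{E}_{H_J}^0$ and the membership $1\in D(\bar{H}_J)$ with $\bar{H}_J1=0$ requires controlling $\nu(x;\dd y)=\tfrac{\phi(x+y)}{\phi(x)}\nu(\dd y)$ both near $y=0$ and for large $|y|$, which is exactly where the additional hypotheses on $\nu$ and $\phi$ are needed; approximating the constant $1$ by $C_0^\infty(\R^d)$ functions in the graph norm so as to justify $\bar{H}1=0$ is the technical crux, to be carried out following the appendix of \cite{A-Cufaro}.
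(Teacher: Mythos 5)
Your proposal is correct and follows essentially the same route as the paper: the paper's proof consists of the one-line observation that ``the analytic statements have already been proved before'' (i.e.\ the preceding computations in Section~\ref{Section2.4} giving the symmetric form representations of $\mathcal{E}_{H_G}^0$ and $\mathcal{E}_{H_J}^0$, closability under hypotheses on $\phi$ and $\nu$, and the Remark establishing $\bar{H}1=0$ and hence invariance of $\mu$), together with an appeal to Dirichlet form theory \cite{FuOTa} for the existence of the associated symmetric Markov process. You assemble exactly these ingredients, merely making explicit what the paper leaves implicit (the von Neumann conjugation argument for self-adjoint extensions, the pairing computation $\int T_t f\,\dd\mu=(f,T_t 1)_\mu=\int f\,\dd\mu$, and the correct identification of the jump part as the technical crux deferred to the additional assumptions on $\nu$ and $\phi$).
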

\begin{proof}
The analytic statemens have already been proved before. The existence of the symmetric Markov process $(Y(t))_{t \in \R_+}$ generated by $-\bar{H}$ is a result of the theory of Dirichlet forms, see, e.g., \cite{FuOTa}.
\end{proof}
\begin{rem}
$-\bar{H}$ is a L\'evy-type operator in the sense of \cite[pag. 158]{APD} and \cite{Jacob-Sch}. The Markov process generated by $-\bar{H}$ is a
Hunt process by the general theory of Dirichlet form. It solves a
stochastic equation in the weak sense, as a solution of the
associated martingale problem, see \cite{APD}, \cite{Ku}.

\end{rem}
\begin{rem}
We can relate $-H$ to a perturbation $H_V^E$ by a real function V
related to $\phi \in L^2(\R^d)$, called potential and a constant $E
\in  \R$ , of a  symmetric operator $L_0$, defined as $L_{0,G}+L_{0,J}$ acting in $L^2 \left(
\R^d,\,dx\right)$, by
$$
 \left( H_V^E f\right) (x) = (L_0 f) (x) + V^E(x) f(x) \:,\: f \in \left\{C_0^\infty (\R^d) \cup \left\{ c \phi \right\}\; , \; c\in \R \right\} \:\,
$$
with
$$V^E(x) := \frac{[L_0 \phi ](x)}{\phi(x)}+E =
\frac{1}{\phi(x)} \int_{\R^d} \left( \delta_y \phi \right) (x) \nu
(\dd y) +E \:,$$ on $\left\{ x \in \R^d \mid \phi(x) \neq 0
\right\}$, $E$ is a constant such that $ H^E_V \phi = E \phi$.

Under suitable assumptions on $\phi$ and $\nu$ one can prove that
$H_V^E$ is lower semi-bounded and essentially self-adjoint in $L^2
\left( \R^d,\,dx\right),$ its closure denoted by $\bar{H^E_V}$ is
self-adjoint with a spectrum $\sigma
\left(\bar{H^E_V}\right)\,\subset\, [E,\infty)$, and $E$ is an
eigenvalue for $\bar{H^E_V}$.
\end{rem}


\subsection{Certain perturbed O-U L\'{e}vy processes and their invariant measures, via Dirichlet forms}\label{Section2.5}

In this Subsection we start with the finite dimensional case of $\R^d$.
Given a measurable space $(S,\mathcal{B})$, a non-negative valued function $N(x,A)$, $x\in S$, $A \in \mathcal{B}$ is called a kernel on $(S,\mathcal{B})$ if $N(x,\cdot)$ is a positive measure on $\mathcal{B}$ for each fixed $x\in S$ and if $N(x,\cdot )$ is a $\mathcal{B}$-measurable function for each fixed $A\in \mathcal{B}$. If an additional condition
that $N(x,S)\leq 1$, $x\in S$ is imposed, then $N$ is called a Markovian kernel.
We write
$$
    (Nu)(x):=\int_{S} u(y) N(x,\dd y)$$
whenever the integral make sense.
Now let $\mu$ be
 a given $\sigma$-finite Borel measure on $\R^d$. Suppose also that we are given
a kernel $N(x,B)$ on $\R^d \times \mathcal{B}(R^d)$ satisfying the following three conditions:
\begin{enumerate}
    \item  for any $\varepsilon >0$, $N(x,\R^d\setminus U_\epsilon(x))$ is, as function of $x\in \R^d$, locally integrable with respect to $\mu$. Here $U_\epsilon(x))$ is the $\epsilon$-neighbourhood of $x$;
  \item \label{it:N2} $N$ is symmetric, in the sense that
  \begin{align*}
      \int_{\R^d} f(x) (Ng)(x) \mu(\dd x):= \int_{\R^d} (Nf)(x) g(x) \mu(\dd x), \qquad for \ any f,g\in B^+(\R^d),
  \end{align*}
with $B^+(\R^d)$ denoting the set of bounded, Borel measurable mappings on $\R^d$.
\item \label{it:N3} for any compact set $K\subset \R^d$,
\begin{align*}
\int_{\R^d \times \R^d} |x-y|^2 N(x,\dd y) \mu(\dd x) <\infty.
\end{align*}
\end{enumerate}
We notice that condition \ref{it:N2} implies that $N$ determines a positive symmetric Radon measure
$J(\dd x,\dd y)$ on $\R^d \times \R^d \setminus D$ ($D$ is the diagonal set) by
\begin{align*}
   \int_{\R^d \times \R^d \setminus D} f(x,y) J(\dd x, \dd y) = \int_{\R^d \times \R^d}
   f(y,x) J(\dd x, \dd y),
\end{align*}
for any $f\in C_0(\R^d \times \R^d \setminus D)$.

Now put
\begin{align*}
    \mathcal{E}_J (f,g):= \int_{\R^d \times \R^d \setminus D} (f(x)-f(y))(g(x)-g(y)) J(\dd x,\dd y),
\end{align*}
with domain
\begin{align*}
   D(\mathcal{E}_J):= \left\{ f\in L^2(\R^d; \mu): f \ is \ Borel \ measurable, \mathcal{E}_J(f,f) <\infty\right\}.
\end{align*}
Then $\mathcal{E}_J$ is
 a jump
Dirchlet form in the sense of Fukushima
(see \cite[pag. 5]{Fukushima80}) with reference space
$L^2(\R^d; \mu)$.
The proof of the last sentence can be found in \cite[Example 1.2.4., pag. 13]{Fukushima80}).
Moreover, we notice that, due to assumption \ref{it:N3} we know that $C^\infty_0(\R^d)$
is contained in $D(\mathcal{E})$ (see, \cite[pag. 14]{Fukushima80}).

By the general theory on Dirichlet forms to $\mathcal{E}_J$ there is uniquely associated a positive symmetric operator $L_\mu^J$ in $L^2(\R;\dd \mu)$ with domain $D(L_\mu^J) \subset D(\mathcal{E}_J)$.
We are going to exhibit the form of $L_\mu^J$ on $C_0^\infty(\R^d)$. By the relation of $\mathcal{E}_J$ and $L_\mu^J$ we find that
\begin{align}\label{eq:Dirform}
    \mathcal{E}_J(f,g) = \langle - L_\mu^J f,g\rangle,
\end{align}
with $\langle \,,\, \rangle$ the $L^2(\mathbb{R}^d,\mu)-$scalar product and
\begin{align*}
     L_\mu^J f(x)= \int_{\R^d}( f(y)-f(x) )\nu (x;\dd y), \qquad \qquad \mu - a.e.
\end{align*}
and $\nu(x,dy)$ is the Radon-Nykodym derivative of $J(\dd x, \dd y)$ with respect to
$\mu(\dd x)$, that is
\begin{align}\label{eq:nu}
   \int_B \nu(x;\Gamma)\mu(\dd x) = \int_B \, 2 J(\dd x,\Gamma), \qquad \textrm{for any pair of Borel sets } B,\Gamma \textrm{in } \R^d,
\end{align}
provided that this Radon-Nykodym exists.
We notice that by construction,  if $\mu$ is a finite measure then it is infinitesimal invariant for the operator $L^J_\mu$, that is
\begin{align*}
   \int_{\R^d}  L_\mu^J f(x) \dd \mu(x) =0,
\end{align*}
for all $f\in C^\infty_0(\R^d)$.
This follows from the combination of \eqref{eq:Dirform} with the definition of $\nu$ given in \eqref{eq:nu}.
It also follows, $L_\mu^J$ being selfadjoint in $L^2(\R^d,\dd \nu)$, that $\mu$ is
invariant for the semigroup generated by $L_\mu^J$.

We are going to perturb $\mathcal{E}$ by a Dirichlet form $\mathcal{E}_D$ of diffusion type on $C_0^\infty(\R^d)$ mantaining
the Hilbert space $L^2(\R^d;\dd \mu)$. Such kind of forms can be written as
\begin{align*}
      \mathcal{E}^D (f,g):= \frac{1}{2}\int_{\R^d \times \R^d \setminus D} \nabla f(x)Q  \nabla g(x)  \mu (\dd x), \qquad f,g \in D(\mathcal{E}^D),
\end{align*}
with $Q$ a positive and symmetric real valued matrix.
By the general theory such a form is associated with a symmetric positive generator which we call $L^D_\mu$ satisfying the relation
\begin{align*}
   \mathcal{E}^D(f,g)= \langle L_\mu^D f,g\rangle_{L^2(\R^d;\dd \mu)}
\end{align*}
{color{red} Do we have to take into account what follows ? Maybe there is some part which has to be canceled out... }
Assumptions on $\mu$ are known such that $L_\mu^D$ on $C_0^\infty(\R^d)$ takes the form
\begin{align*}
       L^D_\mu f(x)= \frac{1}{2} {\rm Tr}[\sqrt{Q} D^2f(x) \sqrt{Q^*}] + \langle\beta_\mu(x),\nabla f(x)\rangle,
\end{align*}
where $\beta_\mu$ is a vector field in $L^2(\R^d;\dd \mu)$ depending on $\mu$.
Also in this case, if $\mu$ is finite, we easily see that we have infinitesimal invariance
of $\mu$ under $L^D_\mu$ and in fact, invariance, $L^D_\mu$ being symmetric.
Let us consider the sum of the Dirichlet form $\mathcal{E}^J$ and $\mathcal{E}^D$
on $C_0^\infty(\R^d)$ in $L^2(\R^d;\dd \mu)$. With the previous assumptions on $J$ and $\mu$ the closure of this sum is still a Dirichlet form $\mathcal{E}$ with domain
$D(\mathcal{E})$ exists in $L^2(\R^d;\dd \mu)\times L^2(\R^d;\mu)$. Let us call
$L$ the selfadjoint associated operator in $L^2(\R^d;\dd \mu)$. Then
\begin{align*}
    \mathcal{E}(f,g)= \langle L f,g \rangle_{L^2(\R^d;\dd \mu)}
\end{align*}
with
\begin{align*}
    Lf(x)&=  L^D_\mu f(x)+ L_\mu^J f(x)\\
         &= \frac{1}{2} {\rm Tr}[\sqrt{Q} D^2f(x) \sqrt{Q^*}] + \langle\beta_\mu(x),\nabla f(x)\rangle + \int_{\R^d\setminus\left\{x\right\}}( f(y)-f(x) )\nu (x;\dd y), \qquad f\in C^\infty_0(\R^d),
\end{align*}
where $L_\mu^J,L^D$ are the generators considered above one has \eqref{eq:nu} for $\nu$.
We notice that the integral part in the expression of $L$ can be rewritten as
\begin{align*}
       \int_{\R^d\setminus\left\{0\right\}}( f(x+y)-f(x) )\nu (x;x+\dd y);
\end{align*}
with this change the operator $L$ becomes a particular case of the form considered in \eqref{eq:op-non-loc}.
Moreover, if $\mu$ is finite,
then $\mu$ is infinitesimal invariant under $L$ and invariant under the generated semigroup
$P_t:=e^{tL}, t\geq 0$ in $L^2(\R^d;\dd \mu)$. By the general theory of regular Dirichlet forms there is a Hunt process $(X(t))_{t\geq 0}$ in $\R^d$ properly associated with $\mathcal{E}$, whose transition semigroup is $(P_t)_{t\geq 0}$, i.e.
\begin{align*}
    (P_t f)(x) = \mathbb{E}[f(X(t))].
\end{align*}
In the following we exhibit the stochastic differential equation satisfied by the process
$(X(t))_{t\geq 0}$. To this end we recall that for our infinitesimal  generator
\begin{align*}
       Lf(x)
         = \frac{1}{2} {\rm Tr}[\sqrt{Q} D^2f(x) \sqrt{Q^*}] &+ \langle\beta_\mu(x),\nabla f(x)\rangle \\ &+ \int_{\R^d\setminus\left\{0\right\}}( f(x+y)-f(x) )\nu (x;x+\dd y)\;,\; f\in C^\infty_0(\R^d),
\end{align*}
if $\nu(x,x+dy)$ has a Radon-Nikodym density $ \zeta(x,x+y)$ with respect to some positive measure $\tilde{\nu}$ on $\mathcal{B}(\mathbb{R}^d)$ , and then $\nu(x, x+ \Gamma)=\int\limits_\Gamma  \zeta(x,x+y) \tilde{\nu} (dy)$ holds.
The associated stochastic integral equation is
\begin{equation}
\begin{aligned}\label{eq:int-eq}
   X(t)&= X(0) + \int_0^t \sqrt{Q} \dd B(s) + \int_0^t \beta_\mu(X(s))\dd s \\
      & + \int_0^t \int_{|y|<1} \zeta(X(t),y)y \tilde{N}(\dd s, \dd y) +
     \int \int_{|y|\geq 1} \zeta(X(t),y)y N(\dd s, \dd y),
\end{aligned}
\end{equation}
where $(B(t))_{t \geq 0}$ is a standard $d$-dimensional Brownian motion,
$N(\dd s, \dd y)$ is a Poisson random measure (independent of $(B(t))_{t\geq 0}$)
associated with a point process on $\R^d$ with intensity measure
$\tilde{\nu}$, such that $\tilde{N}(\dd s,\dd y)$ is the compensated Poisson random measure, i.e.
\begin{align*}
     \tilde{N}([0,t],\dd y) = N([0,t], \dd y) - t \nu (\dd y)\,.
\end{align*}
One has $\tilde{\nu}(U)= \mathbb{E}[N([0,1],U)], U \in \mathcal{B}(\R^d)$ and
$\zeta$ is such that
\begin{align*}
   \nu(x,x+\Gamma) = \int_{\Gamma} \zeta(x,x+y) \nu(\dd y),
\end{align*}
(see \cite{ImkWil12} for more detail on the definition of $\zeta$).
Taking into account the arguments above, in particular \eqref{eq:nu}, the relation between $\zeta$, $J$, $\mu$ and $\nu$ can be expressed as follows:
\begin{align*}
     \int_B \int_{\Gamma} \zeta(x,x+y) \nu(\dd y) \mu(\dd x) =
       \int_B \int_{\Gamma} 2 J(\dd x, x+\dd y), \qquad for \ any \ B,\Gamma \in \mathcal{B}(\R^d).
\end{align*}
This shows that $\zeta$ also is the Radon-Nikodym derivative of $J$ with respect the product measure $\mu \times \nu$ on $\R^d \times \R^d$.
\begin{rem}
Arguing as in \cite{PeZa}
the integral equation \eqref{eq:int-eq} can also be written in differential form as
\begin{equation}\label{HAJOM}
    \dd X(t) = \sqrt{Q} \dd B(t) + \beta_\mu(X(t))\dd t + G(X(t)) \dd L(t)
\end{equation}
where $(L(t))_{t\geq 0}$ is a L\'evy process with values in the
space $U:=\mathcal{M}(\R^d)$, with $ \mathcal{M}(\R^d)$ the space
of $\sigma-$finite signed measures on $\R^d$, and for any $x\in
\R^d$, $G(x): U \to \R^d$ is the linear map given by
\begin{equation}\label{hajom}
     G(x)\lambda = \int_{\R^d} \zeta(x,x+y) y \lambda(\dd
     y),\,\,\,\lambda \in \mathcal{M}(\R^d)\;,
\end{equation}
the integral in (\ref{hajom}) being assumed to exists.\\
 The finite dimensional distributions of $(L(t))_{t\geq 0}$ coincide with those given by
\begin{align*}
   \int_0^t \int_{|y|<1}y \tilde{N}(\dd s, \dd y) +
     \int \int_{|y|\geq 1} y N(\dd s, \dd y).
\end{align*}
We note that the representation (\ref{HAJOM}) can be put in relation
with (\ref{SDEsymb}), see, eg., \cite{BRHA}.
\end{rem}



\section{Invariant measures in infinite dimensions} \label{Section3}
\subsection{The case of  the infinite dimensional O-U L\'evy process}\label{Section3.1}
We shall work in the setting of \cite{ADPMS1}. We consider the
linear stochastic differential equation:

\begin{equation}\label{eq:OU}
\begin{aligned}
   &\dd X(t)= A X(t) \dd t + \dd L(t), \qquad t\geq 0,\\
   & X(0)=x \in \cH,
\end{aligned}
\end{equation}
where $\cH$ is a real separable Hilbert space, $(L(t))_{t\geq 0}$ is
an infinite dimensional cylindrical symmetric L\'evy process and $A$
is a self-adjoint operator generating a  $C_0$-semigroup in $\cH$.\\
We further assume that $A$ is strictly negative such that there exists a
basis $({e_n})_{n\,\in\,\N}$ in $\cH$  verifying
\begin{equation}
({e_n})_{n\,\in\,\N}\,\,\subset\,\,D(A),\,\,\,\,\,\,\,\,A\,e_n=-\lambda_n\,e_n,
\end{equation}
where
$\lambda_n\,>0,\,n\,\in\,\N_0,\,\,\,\,\lambda_n\,\uparrow\,+\infty.$\\
Assume moreover that for some $\beta_n>0,\,n\,\in\,\N,$ we have
\begin{equation}
\sum_{n=1}^\infty \left( \beta^2_n \int_{|y|<1/\beta_n} y^2
\nu_\mathbb{R}(\dd y) + \int_{|y|\geq 1/\beta_n} \nu_\mathbb{R}(\dd
y)\right)<+\infty,
\end{equation}
 for
some symmetric L\'evy measure $\nu_{\R}$ on $\R$, (i.e.
$\nu_{\R}(-A)=\nu_{\R}(A),\,\,\forall\,A\,\in\,\cB(\R)).$ We set
\begin{equation}
L(t)=\sum_{n=1}^\infty \beta_n L^n(t) e_n,
\end{equation}
with $L^n(t)$ defined by
\begin{equation}
 \mathbb{E}[e^{ihL^n(t)}] = e^{-t \psi_\R(h)}, \quad h\in
 \R,\,t\,\geq\,0,
\end{equation}
and
\begin{equation}
 \psi_\R(h)= \int_{\R} (1-\cos(hy) \nu_\R(\dd y), \quad h\in \R.
\end{equation}
As shown in \cite{ADPMS1} if
\begin{equation}
\int_1^{+\infty} \log(y)\nu_\R(\dd y) <\infty,
\end{equation}
and
\begin{equation}
\sum_{n=1}^\infty \frac{1}{\lambda_n} <\infty,
\end{equation}
then the L\'evy driven Ornstein-Uhlenbeck process
$X=(X(t))_{t\geq\,0}$ given by
\begin{equation}
X(t)=e^{tA}x+\sum_{n=1}^\infty \left( \int_0^t
   e^{-\lambda_n(t-s)} \beta_n \dd L^n(s)\right) e_n \notag,
\end{equation}
is well defined, in the sense that the series is convergent in the probability sense, and the
process
$X$ is adapted, i.e. $X(t)$ is $\cF_{t}$-measurable and Markovian, see.e.g. \cite[Th.2.8]{PrZa}.\\
$X(t)$ solves $dX(t)=A\,X(t)\,dt+dL(t)$ in the mild sense. It is
shown in [\cite{ADPMS1}, Proposition 2.5] that $X$ admits a unique
invariant probability measure (i.e. X(t) is invariant under the
Markovian transition semigroup associated to $X(t).$
\begin{rem}
The existence of an invariant measure has also been proven in
another non necessary cylindrical stting with related conditions in
\cite{ChoMi}.
\end{rem}

Since the semigroup $e^{-tA}$ is stable in $H$ (we recall that $A$
is strictly negative), we can apply Theorem 3.3 in \cite{ChoMi}, and
we get that the invariant measure $\mu$ for $X(t)$ is of the form
$\mu=\nu_G\,\ast\,\nu_J$, where
\begin{equation}\nu_G(dx)=\,N(0;\,A^{-1})(dx),\,x\,\in\,\cH,\end{equation}
and
\begin{equation}
\nu_J(B)={\cal{L}}\,\Big( \dint_0^\infty\,e^{-sA}\,dL(s) \Big)(B), B
\in\, \cal{B} (\cal{H}).
\end{equation}
\begin{enumerate}
\item Note that \cite{ChoMi} proved in particular that
$\dint_0^\infty\,e^{-sA}\,dL(s)$ exists as an infinitely divisible
distribution with L\'evy characteristics $$\Big(0, \dint_0^t
\nu(\gamma_s^{-1} x)ds, \dint_0^\infty [\chi_B(\gamma,
x)-\chi_B(x)]\,\nu(dx) ds \Big).$$
\item Note also that this representation is completely analogous to
the one in finite dimensions, see, [\cite{Sato}, Lemma 17.1].
\end{enumerate}
We remark that both $\nu_G$ and $\nu_J$ are weak limits of their
restrictions $\nu_G^{(n)},\,\nu_J^{(n)}$ onto the finite dimensional
subspaces spanned by the ${\{e_1,...,e_n\}}$ in $\cH$.

\subsection{Certain perturbed  infinite dimensional O-U L\'evy processes } \label{Section3.3}
Let us now indicate how to extend the approach developed in previous sections in the finite dimensional setting to the case where $\mathbb{R}^d$ is
replaced by a separable Hilbert space $\mathcal{H}$.

The theory of Dirichlet forms on such spaces is well developed, see
\cite{Alb, AMR, MaRo}, and references therein. Let $\mu$ be a
probability measure on $\mathcal{H}$ . We assume that $\mu$ is
admissible in the sense of \cite{MaRo}. Let $\mathbcal{E}_\mu^D$ be a
classical, quasi regular, Dirichlet form (in the sense of
\cite{AlRo, MaRo})  acting on $D\left(
\mathbcal{E}_\mu^D\right) \subset L^2(\mathcal{H},\mu)$. To it there
is uniquely associated a self-adjoint operator $L_\mu^D$ with domain
$D(L^D_\mu)$ acting in $L^2(\mathcal{H},\mu)$ such that $-L_\mu^D
\geq 0$ and $\mathbcal{E}_\mu^D(f,g)= \left( f, (-L_\mu^D) g
\right)_{L^2(\mathcal{H},\mu)}\;,$ for all $ f \in
D(\mathbcal{E}_\mu^D)$, $g \in D\left( L_\mu^D\right)$.

Let $FC^\infty_b$ the family of cylinder functions which are $C^\infty$ and with bounded derivatives of any orders on the basis. By the definition of quasi regular Dirichlet
 forms $FC_b^\infty$ is dense in $L^2(\mathcal{H},\mu)$. We have that $\left(-L^D_\mu g \right) (x) = \Delta g +\beta_\mu \cdot \nabla g \;,$ with $\beta_\mu \in L^2(\mathcal{H},\mu)$ and
  $\Delta g$, $\nabla g$ defined in the natural way, see \cite{MaRo}.

As in the finite dimensional case we have that $\mu$ is invariant under the semigroup $e^{t L_\mu^D}$.

Let us consider a symmetric , Borel measure on $\left(\mathcal{H} \times \mathcal{H}\right) \setminus D$, where $D$ is the diagonal in $\mathcal{H} \times \mathcal{H}$, and consider the associated jump Dirichlet form $$\mathbcal{E}_\mu^J (f,g)= \int_{\mathcal{H}} \int_{\mathcal{H}} [f(x)-f(y)] [g(x)-g(y)] J(dx,dy)\;, f,g \in D\left( \mathbcal{E}_\mu^J \right) \subset L^2(\mathcal{H},\mu)\:.$$
Under some conditions on $\mu$ and $J$, we have that $\mathbcal{E}_\mu^J$ exists,  as the closure of its restriction to $f,g \in FC^\infty_b$ in $L^2(\mathcal{H},\mu)$, see \cite{ASK}. The corresponding self-adjoint operator $L_\mu^J$ has the form
$$
 \left( L_\mu^Jf\right)(x) = \int [f(y)-f(x)] \nu^{J,\mu}(x,dy) \:,
$$
provided $J(dx,dy)$ is absolutely continuous with respect to
$\mu(dx)$. We denoted by $\nu^{J,\mu}(x,dy)=\frac{2
J(dx,dy)}{\mu(dx)}$ the corresponding Radon-Nikodym derivative
(multiplied by $2$).

As in the finite dimensional case, we have that $\mu$ is invariant
under the semigroup $e^{t L_\mu^J}$, $t \geq 0 $, generated by
$L_\mu^J$. By the general theory, see, e.g., \cite{MaRo},
$\mathbcal{E}_\mu= \mathbcal{E}^D_\mu + \mathbcal{E}^J_\mu$ is a
Dirichlet form on $L^2(\mathcal{H},\mu)$, with an associated
self-adjoint operator $L_\mu$ such that $L_\mu= L^D_\mu +L^J_\mu$,
on $D(L_\mu^D) \cap D(L_\mu^J) \supset FC^\infty_b$, in
$L^2(\mathcal{H},\mu)$. Moreover $\mu$ is invariant under the $C_0$
Markov semigroup $e^{t L_\mu}$, $t \geq 0$, generated by $L_\mu$.

By the general theory, see \cite{AMR}, there is a  decomposition for
the Markov process $X_t$ properly associated with
$\mathbcal{E}_\mu$. For any $f \in D\left(\mathbcal{E}_\mu\right)$
we have
\begin{equation}\label{OM1}
  f(X_t) = f[X_0]+ N_t^{[f]}+ M_t^{[f]} \,, \mathbb{P}^\mu \; a.s. \;,
\end{equation}
where $N_t^{[f]}$ is a smooth zero-energy additive functional, and
$M_t^{[f]} $ is an additive martingale functional. So far for the
general theory on $\mathcal{H}$. Let us now briefly indicate how to
relate such structures to the corresponding finite dimensional ones
discussed in chapter 2.\\ Let us first take $\mu$ to be the
invariant measure of the $O-U$ process on $\mathcal{H}$ perturbed by
a non linear drift term which we discussed in \cite{ADPM}. In
particular $\mu$ has the form, $e^{-G} \frac{\mu_A}{\int_\mathcal{H}
e^{-G} d\mu_A}  $ where $G$ is such that $G^\prime = F$ in the
Fr\'echet sense, $\mu_A$ is the Gaussian probability measure which
is invariant for the O-U process with linear drift $A$, i.e. $\mu_A
= N\left(0,A^{-1}\right)$. Then $\mu$ is the invariant measure of
the process solving
$$
 dX_t = \left[ A X_t + F(X_t) \right] dt +  dW_t \;,
$$
with $A,F$ and $W$ as in \cite{ADPM}.\\
In this case we have thus, in particular, that the linear function
is in $D\left(\mathbcal{E}_\mu\right)$ and (\ref{OM1}) holds, with
$$N_t=W_t,\,\,\,\,\,\,M_t=\dint_0^t F(X_s)\,ds$$

In the construction of $\mu$ in \cite{ADPM} we used finite
dimensional approximations, together with the cylindrical structure
of $W_t,$ hence the relation with Chapter 2 is established in this
case of a Gaussian additive noise. \\In the case where $\mu$ is
not absolutely continuous with respect to some reference Gaussian measure,
one has to go through a more involved analysis. Elements of it have been
already indicated in \cite{AlRo}. We plan to carry out this programme in further
publications.

\newpage

\section*{Acknowledgments}
\noindent  This work was supported by King Fahd University of
Petroleum and Minerals under the project $\sharp\,IN121060$. The
authors gratefully acknowledge this support.\\We thank Stefano
Bonaccorsi and
Luciano Tubaro at the University of Trento for many stimulating discussions.\\
The authors would also like to gratefully acknowledge the great
hospitality of various institutions. In particular, for the first
author, CIRM, the Mathematics Department of the University of Trento,
and the Department of Computer Science of the University of Verona;
for him and the fourth author, King Fahd University of Petroleum and
Minerals at Dhahran; for the second, third  and fourth authors IAM
and HCM at the University of Bonn, Germany.

\medskip

\begin{flushleft}
\footnotesize
{\it S. Albeverio}  \\
 Dept. Appl. Mathematics, University of Bonn,\\
HCM;  BiBoS, IZKS, KFUPM(Dhahran); CERFIM (Locarno)\\
\medskip
{\it L. Di Persio}\\
University of Verona, Department of Computer Science, \\
 Strada le Grazie 15 - 37134 Verona,  Italia \\
\medskip
{\it E. Mastrogiacomo}\\
Universit\'a degli studi di Milano Bicocca, Dipartimento di Statistica e Metodi Quantitativi \\
   Piazza Ateneo Nuovo, 1 20126 Milano
\\
\medskip
{\it B. Smii} \\
Dept. Mathematics, King Fahd University of Petroleum and Minerals, \\
Dhahran 31261, Saudi Arabia\\

\medskip
 E-mail: albeverio@uni-bonn.de \\
\hspace{1,1cm} luca.dipersio@univr.it\\
\hspace{1,1cm} elisa.mastrogiacomo@polimi.it\\
\hspace{1,1cm}boubaker@kfupm.edu.sa

 \end{flushleft}
\end{document}